\newcommand{\gbar}{\overline{G}}
\newcommand{\dsp}{\displaystyle}
\theoremstyle{plain}
\newtheorem{Thm}{Theorem}
\newtheorem{Cor}[Thm]{Corollary}
\newtheorem{lemma}[Thm]{Lemma}
\newtheorem{Prop}[Thm]{Proposition}
\newtheorem{Def}[Thm]{Definition}
\newtheorem{remark}[Thm]{Remark}
\newtheorem{example}[Thm]{Example}
\newcommand{\ngone}{\text{NG-1 }}
\newcommand{\ngtwo}{\text{NG-2 }}
\newcommand{\ngthree}{\text{NG-3 }}
\newcommand{\NGcal}{\cal{N}\cal{G}}
\newcommand{\NGone}{$\cal{N}\cal{G}$-1}
\newcommand{\NGtwo}{$\cal{N}\cal{G}$-2}
\newcommand{\NGthree}{$\cal{N}\cal{G}$-3}
\newcommand{\NGonecaptwo}{(\NGone \ $\cap$ \NGtwo)}
\newcommand{\NGonecuptwo}{(\NGone \ $\cup$ \NGtwo)}
\newcommand{\NGonemtwo}{(\NGone \ $-$ \NGtwo)}
\newcommand{\NGtwomone}{(\NGtwo \ $-$ \NGone)}
\date{June 1, 2015 }
\title{Split Graphs and Nordhaus-Gaddum Graphs}
\author{Christine Cheng\\
\small Dept. of Computer Science\\
\small Univ. of Wisconsin at Milwaukee\\
\small Milwaukee, WI 53201\\
\small \tt ccheng@uwm.edu\
\and
  Karen L. Collins\\
\small Dept. of Mathematics and Computer Science\\
\small Wesleyan University\\
\small Middletown CT 06459-0128\\
\small\tt kcollins@wesleyan.edu\
\and
Ann N. Trenk\\
\small Department of Mathematics\\
\small Wellesley College\\
\small Wellesley MA 02481\\
\small\tt atrenk@wellesley.edu
}
\begin{document}
\maketitle

\begin{abstract} 

A graph $G$ is an NG-graph if $\chi(G) + \chi(\gbar) = |V(G)| + 1$.  We characterize NG-graphs solely from degree sequences leading to a linear-time recognition algorithm.  We also explore the connections between NG-graphs and split graphs.  	There are three types of NG-graphs and  split graphs can also be divided naturally into two categories, balanced and unbalanced. We characterize each  of these five classes by degree sequence.      We construct bijections between classes of NG-graphs and balanced and unbalanced split graphs which, together with the known formula for the number of split graphs on $n$ vertices, allows us to compute the sizes of each of these classes.  Finally, we provide a bijection between unbalanced split graphs on $n$ vertices and split graphs on $n-1$ or fewer vertices providing evidence for our conjecture that the rapid growth in the number of split graphs comes from the balanced split graphs. 

\end{abstract}

\bibliographystyle{plain} 

 \bigskip\noindent \textbf{Keywords:  Nordhaus-Gaddum theorem, NG-graphs, split graphs,  pseudo-split graphs, degree sequence characterization, bijection, counting}


\bigskip
\noindent

\section{Introduction}

 For a graph $G$, 
  the number of vertices in a largest clique in $G$ is denoted by $\omega(G)$  and    the number of vertices in a largest  stable set (independent set) in $G$ is denoted by $\alpha(G)$.  We denote the complement of $G$   by $\gbar$ and the graph induced in $G$ by $X \in V(G)$  by $G[X]$.   We write $nbh(x)$ to denote the set of vertices adjacent to vertex $x$.
  For a set of graphs, $\cal C$, we denote by ${\cal C}_n$ the set of  graphs in $\cal C$ with $n$ vertices. 


 
A well-known  theorem by Nordhaus and Gaddum \cite{NoGa56} states that  the following is true for any graph $G$: $$ 2 \sqrt{|V(G)|}   \leq \chi(G) + \chi(\bar{G}) \leq  |V(G)| + 1.$$   We call $G$ a {\it Nordhaus-Gaddum graph} or {\it NG-graph}  if $G$ satisfies the maximum value of this inequality; i.e.,   $ \chi(G) + \chi(\bar{G})  =   |V(G)| + 1.$  Finck \cite{Fi66}  and Starr and Turner \cite{StTu08} provide  two different  characterizations of NG-graphs.  More recently,  Collins and Trenk  \cite{CoTr13}  define  the ABC-partition of a graph and characterize  NG-graphs in terms of this partition.

\begin{Def}\rm 
For a graph $G$, the  \emph{ABC-partition} of $V(G)$  (or of $G$) is  

 $A_G = \{v \in V(G): deg(v) = \chi(G) -1\}$
 
 $B_G = \{v \in V(G): deg(v) > \chi(G) -1\}$
 
 $C_G = \{v \in V(G): deg(v) < \chi(G) -1\}$.   
 
 When it is unambiguous, we write $A=A_G$, $B=B_G$, $C=C_G$.
  
\label{ABC-def}
\end{Def}


\begin{Thm} \label{NG-charac}  {\rm (Collins and Trenk \cite{CoTr13})}
 A graph $G$  is an NG-graph if and only if its ABC-partition satisfies
  
 (i) $A \neq \emptyset$ and  $G[A]$ is a clique, a stable set, or a 5-cycle
 
 (ii) $G[B]$ is a clique
 
 (iii)  $G[C]$ is a stable set
 
 (iv)  $uv \in E(G)$ for all $u \in A$, $v \in B$ 
 
 (v)  $uw \not\in E(G)$ for all $u \in A$, $w \in C$.
 
\label{NG-equal-thm}
\end{Thm}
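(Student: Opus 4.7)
I will prove both directions of the characterization.

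\emph{Reverse direction (sufficiency).} Assume (i)--(v). Conditions (ii) and (iv) make $G[A\cup B]$ the join of $G[A]$ with the clique $K_{|B|}$, giving $\chi(G[A\cup B]) = \chi(G[A]) + |B|$. To extend such a coloring to $C$, note by (v) that each $v\in C$ has all its $G$-neighbors in $B$: if $v$ is non-adjacent to some $b\in B$ then the color of $b$ is free at $v$, and if $v$ is adjacent to all of $B$ then any $A$-color is free at $v$. Hence $\chi(G) = \chi(G[A]) + |B|$. Applying the same argument to $\bar{G}$---where (ii)--(v) translate into analogous conditions after interchanging $B$ and $C$---yields $\chi(\bar{G}) = \chi(\overline{G[A]}) + |C|$. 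Adding and using $|A|+|B|+|C|=n$ reduces NG to $\chi(G[A]) + \chi(\overline{G[A]}) = |A|+1$, which holds in each of the three cases of (i) (cliques and stable sets give $|A|+1$, and $C_5$ gives $3+3 = 5+1$).

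\emph{Forward direction, Steps 1--2.} Assume $G$ is NG with $k=\chi(G)$. First observe that $A_G = A_{\bar{G}}$, since $\deg_G(v) = k-1$ iff $\deg_{\bar{G}}(v) = n-k = \chi(\bar{G})-1$; consequently $B_G = C_{\bar{G}}$, so (ii)/(iii) and (iv)/(v) are complementary pairs under graph complementation. Step~1: Show $A\neq\emptyset$. Every $k$-chromatic graph contains a $k$-critical subgraph of minimum degree $\geq k-1$, so $|A|+|B|\geq k$. Applied to $\bar{G}$, this gives $|A|+|C|\geq \chi(\bar{G}) = n+1-k$; summing, $2|A|+|B|+|C|\geq n+1$, hence $|A|\geq 1$. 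Step~2: Establish (v) and (ii) (then (iv) and (iii) follow by complementation). For (v), suppose $u\in A$ and $w\in C$ are adjacent; the degree disparity $\deg u = k-1$, $\deg w \leq k-2$ allows a Kempe-chain exchange in an optimal $k$-coloring that eventually empties a color class, contradicting $\chi(G) = k$. An analogous argument in $\bar{G}$ applied to two non-adjacent $B$-vertices (which lie in $C_{\bar{G}}$) yields (ii).

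\emph{Forward direction, Step 3.} With (ii)--(v) in hand, the reverse-direction computation gives $\chi(G) = \chi(G[A]) + |B|$ and $\chi(\bar{G}) = \chi(\overline{G[A]}) + |C|$, so NG reads $\chi(G[A]) + \chi(\overline{G[A]}) = |A|+1$, i.e., $G[A]$ is itself NG. Moreover, by (iv) and (v) each $v\in A$ has exactly $|B|$ neighbors outside $A$, so $\deg_{G[A]}(v) = k-1-|B| = \chi(G[A])-1$; thus $G[A]$ is $(\chi(G[A])-1)$-regular. Brooks' theorem then forces each component of $G[A]$ to be $K_{\chi(G[A])}$ or an odd cycle, and a short case check against the identity $\chi(G[A]) + \chi(\overline{G[A]}) = |A|+1$ eliminates every possibility except $K_{|A|}$, $\bar{K}_{|A|}$, and $C_5$.

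The main obstacle is Step~2. The delicate point is that when $u$'s original color class is not a singleton, a single recoloring does not decrease $\chi(G)$, and one must argue more carefully---by iterating the Kempe swap along a maximal alternating chain, or equivalently by dualizing to an optimal clique cover via the identity $\chi(G)+\theta(G) = n+1$---to force the contradiction. Once (ii)--(v) are secured, the remaining Steps~1 and~3 are short reductions to standard results (color-criticality and Brooks' theorem).
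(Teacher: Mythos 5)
First, note that the paper does not prove Theorem~\ref{NG-charac}; it is imported from \cite{CoTr13}, so your proposal must stand on its own. Much of it does: the reverse direction is correct and complete (the join structure of $G[A\cup B]$, the greedy extension to $C$, and the reduction to $\chi(G[A])+\chi(\overline{G[A]})=|A|+1$ all check out); the observation $A_G=A_{\overline{G}}$, $B_G=C_{\overline{G}}$ is right and correctly reduces (iii), (iv) to (ii), (v); Step~1 (two applications of color-criticality giving $2|A|+|B|+|C|\ge n+1$) is a clean proof that $A\neq\emptyset$; and Step~3 (each $a\in A$ has $\deg_{G[A]}(a)=\chi(G[A])-1$, then Brooks' theorem plus $\chi(G[A])+\chi(\overline{G[A]})=|A|+1$) correctly isolates $K_{|A|}$, $\overline{K_{|A|}}$ and $C_5$.

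The genuine gap is Step~2, which carries essentially all the weight of the forward direction, and the specific strategy you sketch cannot work as stated. Your claim is that the local configuration alone --- an edge $uw$ with $\deg u=\chi(G)-1$ and $\deg w\le \chi(G)-2$ --- ``allows a Kempe-chain exchange that eventually empties a color class.'' Nowhere in that sentence does the hypothesis $\chi(G)+\chi(\overline{G})=n+1$ enter, so if the argument were valid it would show that \emph{no} graph has a vertex of degree $\chi(G)-1$ adjacent to a vertex of smaller degree. That is false: take $K_4$ on $\{1,2,3,4\}$, add a vertex $5$ adjacent to $1,2$ and a vertex $6$ adjacent to $5$. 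Here $\chi(G)=4$, $\deg(5)=3$, $\deg(6)=1$, so $5\in A$, $6\in C$ and $56\in E(G)$, yet $\chi(G)+\chi(\overline{G})=4+2=6\neq 7$ and certainly $\chi(G)=4$ cannot be reduced. So condition (v) genuinely requires the global NG equality, and you have not indicated how it is to be used; your two proposed repairs (iterated Kempe swaps, or ``dualizing'' to the clique cover identity, which merely restates the hypothesis) are named but not executed, as you yourself concede. A route that does work is the one hidden in the inductive proof of the Nordhaus--Gaddum upper bound: for $w\in C$ one has $\chi(G-w)=\chi(G)$ (otherwise $\deg w\ge\chi(G)-1$), and the equality $\chi(G)+\chi(\overline{G})=n+1$ then forces $G-w$ to be an NG-graph again, setting up an induction on $n$ --- but that is a different argument from the one you propose, and until (ii)--(v) are actually established the proposal is not a proof.
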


By (i) of Theorem~\ref{NG-charac}, there are  three possible forms of an NG-graph.  (See Figure~\ref{NG-fig}.) 

\begin{Def} \rm We say that $G$  is an \ngone  graph if $G[A]$ is a clique, an \ngtwo graph if  $G[A]$ is a stable set, and an \ngthree graph if  $G[A]$ is a 5-cycle.   We also let \NGone  \ be the set of \ngone graphs and likewise define the sets \NGtwo \  and \NGthree.
\label{NG-123-def}
\end{Def}

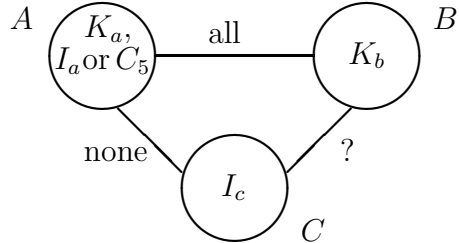
\begin{figure}[thbp]
\begin{center}

\begin{picture}(200,100)(0,30)
\thicklines
\put(50,100){\circle{40}} 
\put(100,50){\circle{40}} 
\put(150,100){\circle{40}} 

 \put(15,110){$A$}
 \put(175,110){$B$}
 \put(125,30){$C$}
 
 \put(95,47){$I_c$}
  \put(143,97){$K_b$}
\put(43,107){$K_a$,}
 \put(32,95){$I_a$\hspace{-.05in} or }

 \put(55,95){$C_5$}
 
 \put(70,100){\line(1,0){60}}
 \put(90,105){all}

\put(56,80){\line(1,-1){24}}
\put(43,60){none}

\put(144,80){\line(-1,-1){24}}
\put(140,60){?}
  
\end{picture}

\end{center}
\caption{The forms of an NG-graph}
 
\label{NG-fig}
\end{figure}




The characterization in Theorem~\ref{NG-charac}  not only provides a clear description of NG-graphs but it also   lends itself to an $O(|V(G)|^3)$-time recognition algorithm for NG-graphs  \cite{CoTr13}. 
More importantly for this article, it shows that NG-graphs are related to split graphs and pseudo-split graphs. 
\medskip

A {\it split graph} is a graph $G$ whose vertex set can be partitioned as $V(G) = K \cup S$, where $K$ induces    a clique and $S$ induces a stable set in $G$.  A detailed introduction to this class appears in \cite{Go80}.   Split graphs are  a well-known class of perfect graphs, and thus $\chi(G) = \omega(G)$ for split graphs.   Split graphs  also have elegant characterization theorems.  F\"{o}ldes and Hammer \cite{FoHa77} give a forbidden subgraph characterization of split graphs as those graphs with no induced $2K_2$, $C_4$ or $C_5$.   
 Split graphs also have a degree sequence characterization  due to Hammer and Simeone \cite{HaSi81} which we present in Theorem~\ref{ham-sim-thm}.   This latter characterization implies that split graphs can be recognized in linear time.   

Bl\'{a}zsik et al. \cite{BlHuPlTu93}     consider  the class of graphs that do not contain $C_4$ and $2 K_2$ as induced subgraphs,  later referred to as {\it pseudo-split graphs} \cite{MaPr94}.  
They  show  that like split graphs,  pseudo-split graphs can   be defined in terms of   vertex sets  partitions.  In particular,  a graph $G$ is a pseudo-split graph if and only if $V(G)$ can be partitioned into three parts so that (i) first part is either empty or induces a 5-cycle, the second part  a clique,  the third part a stable set and (ii) whenever the first part is a 5-cycle,  every vertex in the first part is adjacent to every vertex in the second part but there are no edges between the first part and the third part.  Interestingly,  Bl\'{a}zsik et al.  also note  that pseudo-split graphs are  almost  extremal in terms of the Nordhaus-Gaddum inequality because for any such graph $G$,  $\chi(G) + \chi(\bar{G}) \ge |V(G)|$.   In the process of proving this result,  they show  that  if  $G$ contains an induced 5-cycle then $\chi(G) + \chi(\bar{G}) \ge |V(G)| + 1$ and   thus  $G$ is an NG-graph.   

The next result follows from Theorem~\ref{NG-charac}  and the characterization of pseudo-split graphs discussed above.  In particular, a graph   is an   NG-3 graph if and only if it is a pseudo-split graph  containing  an induced 5-cycle. 
 \begin{remark} 
A graph is a pseudo-split graph if and only if it is a split graph or an  NG-3 graph. 
\label{pseudo-rem}
\end{remark}

 \begin{Prop} Let $G$ be an NG-graph.  Then $G$ is a split graph if and only if 
  $G \in $ \NGonecuptwo.
 
\label{type3-rem}
\end{Prop}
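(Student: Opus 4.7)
The plan is to prove both directions directly from Theorem~\ref{NG-charac}, without needing any new combinatorial input.

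For the forward direction, I would argue the contrapositive. Suppose $G$ is an NG-graph but $G \notin$ \NGonecuptwo, so $G$ is an \ngthree graph. Then by Definition~\ref{NG-123-def}, $G[A]$ is a 5-cycle, and in particular $G$ contains an induced $C_5$. Since split graphs are $C_5$-free by the F\"{o}ldes--Hammer forbidden subgraph characterization, $G$ cannot be a split graph. (Alternatively, this follows from Remark~\ref{pseudo-rem}, since an \ngthree graph is a pseudo-split graph containing an induced 5-cycle, and the only pseudo-split graphs that are also split are exactly those without such a cycle.)

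For the reverse direction, I would use Theorem~\ref{NG-charac} to exhibit an explicit split partition in each of the two cases. If $G \in$ \NGone, then $G[A]$ is a clique, $G[B]$ is a clique (by (ii)), and every vertex of $A$ is adjacent to every vertex of $B$ (by (iv)); thus $G[A \cup B]$ is a clique. Combined with (iii), which says $G[C]$ is a stable set, the partition $V(G) = (A \cup B) \cup C$ shows $G$ is a split graph. If instead $G \in$ \NGtwo, then $G[A]$ is a stable set, $G[C]$ is a stable set (by (iii)), and there are no edges between $A$ and $C$ (by (v)); thus $G[A \cup C]$ is a stable set. Together with (ii), which gives that $G[B]$ is a clique, the partition $V(G) = B \cup (A \cup C)$ is a split partition of $G$.

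There is no real obstacle: the entire argument is a direct reading of the ABC-partition conditions (i)--(v). The only point worth stating carefully is the forward direction, where one must invoke either the forbidden subgraph characterization of split graphs or Remark~\ref{pseudo-rem} to conclude that an induced $C_5$ precludes being split. Once that is noted, both implications reduce to one-line structural observations.
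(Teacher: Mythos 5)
Your proof is correct and follows essentially the same route as the paper: the forward direction dismisses \ngthree graphs because the induced $C_5$ violates the F\"{o}ldes--Hammer characterization, and the reverse direction exhibits the split partition $K = A \cup B$, $S = C$ for \NGone\ and $K = B$, $S = A \cup C$ for \NGtwo\ using conditions (i)--(v) of Theorem~\ref{NG-charac}. You merely spell out in slightly more detail why $A \cup B$ is a clique and $A \cup C$ is a stable set, which the paper leaves implicit.
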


\begin{proof}
By definition,  \ngthree graphs contain an induced $C_5$, hence are not  split graphs.   Now suppose $G \in $ \NGonecuptwo \ and   let its $ABC$-partition be $V(G) = A \cup B \cup C$.  If $G\in$ \NGone,  let   $K = A \cup B$ and $S = C$  and otherwise, $G \in$ \NGtwo, and we  let    $K =  B$ and $S = A \cup C$.   In either case, using Theorem~\ref{NG-charac}, we get a  partition of $V(G)$ into a clique $K$ and a stable set $S$, so $G$ is a split graph.
\end{proof}
 
 Not all split graphs are NG-graphs.  Indeed, the relationship between these classes,  as well as the results in  Remark~\ref{pseudo-rem} and Proposition~\ref{type3-rem},   are shown in Figure~\ref{venn-fig}.
 We will define the classes of balanced and unbalanced split graphs in the next section. 
  
 


Building on  the work of   Bl\'{a}zsik et al. \cite{BlHuPlTu93},    Maffray and Preissmann \cite{MaPr94}   present a degree sequence characterization for NG-3 graphs. They combine this with the similar characterization for split graphs to get a linear-time recognition algorithm for  pseudo-split graphs. We will discuss similar algorithms for NG-graphs in Section~3.

Finally, we note that  Theorem \ref{NG-equal-thm} is very much related to the notion of graph decomposition that was studied systematically by Tyshchevich \cite{Ty00}.   A graph is said to be {\it decomposable} if its vertex set can be partitioned into three parts $A, B$ and $C$ so that $A\neq \emptyset$ and $B \cup C \neq \emptyset$ and conditions (ii) to (v) of Theorem \ref{NG-equal-thm} are satisfied.   
Thus,  every NG-graph is decomposable except when it is a single vertex or a 5-cycle.  Chv\'{a}tal and Hammer \cite{ChHa77}, Bl\'{a}zik et al. \cite{BlHuPlTu93} and Barrus \cite{Ba13, Ba14} also characterized various graph classes in terms of their decompositions.

 \medskip
 
 Our   main objective is to explore the connections between NG-graphs and split graphs. In Section 2, we study split graphs through the lens of NG-graphs.  In particular, we determine which split graphs are NG-graphs and show how their ABC-partitions relate to their clique-stable set partitions.   
  In Section 3,  we do the opposite and consider NG-graphs through the lens of split graphs.  We provide   degree characterizations for NG-1 and NG-2 graphs that are quite similar to the one for  split graphs.   We also present a degree characterization of NG-3 graphs that is equivalent to the one in \cite{MaPr94}. These  results show  that,  like split graphs and pseudo-split graphs,  NG-graphs have a linear-time recognition algorithm.   In Section 4 we present  various kinds of bijections including those between subclasses of NG-graphs and subclasses of split graphs.   Finally,  in Section 5,  we take advantage of these bijections and present formulas for the number of graphs on $n$ vertices in each of the graph classes we studied. Our work comparing NG-graphs to split graphs  leads to a theorem only about split graphs: that for $n\geq 1$, the number of unbalanced split graphs on $n$ vertices is equal to the number of split graphs on $n-1$ or fewer vertices. 

\begin{figure}
\[\begin{picture}(400,120)
\thicklines
\put(150,60){\oval(300,120)}
\thinlines 
\put(250,80){\oval(300,120)}
\put(15,127){\bf NG-graphs}
\put(25,88){\NGthree}
\put(330,7){Split graphs}
\put(325,112){Balanced}
\put(325,100){split graphs}
\put(150, 88){\NGonecuptwo}
\put(142, 36){Unbalanced split graphs}

\end{picture}\]
\caption{A partition of the class of pseudo-split graphs into NG-graphs and split graphs.}
\label{venn-fig}
\end{figure}
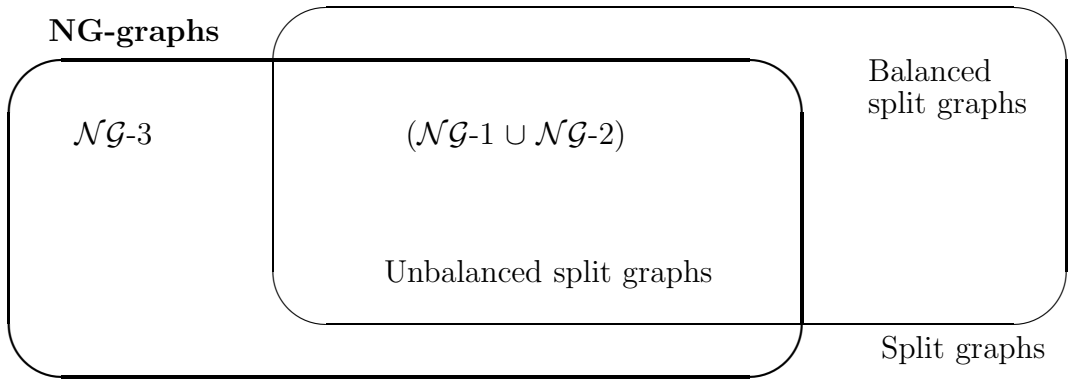

 \section{Split Graphs and NG-graphs}
 
 In this section we consider the set ${\cal S}$  of split graphs and discuss   connections to NG-graphs.  A $KS$-partition of a split graph $G$ is a partition of the vertex set as  $V(G)= K \cup S$ where $K$ is a clique and $S$ is a stable set.
 Just as it is helpful to characterize NG-graphs into the classes \NGone, \  \NGtwo \  and \NGthree \   based on their $ABC$-partition, it is also useful to categorize split graphs based on their $KS$-partitions.   





 \begin{Def} {\rm
 A split graph $G$  is \emph{balanced} if it has a $KS$-partition satisfying $|K| = \omega (G)$ and $|S| = \alpha(G)$   and \emph{unbalanced} otherwise.    We denote the set of balanced split graphs  by $\cal B$ and the set of unbalanced split graphs by ${\cal U}$.  A $KS$-partition is \emph{$S$-max} if $|S| = \alpha(G)$    and \emph{$K$-max} if $|K| = \omega (G)$.     }
 \label{bal-unbal-def}
 \end{Def}
 
  Unlike $ABC$-partitions, $KS$-partitions of a   split graph are not always unique. The terms  \emph{balanced} and \emph{unbalanced} in Definition~\ref{bal-unbal-def} refer to a split graph $G$ while the terms \emph{$K$-max} and \emph{$S$-max} refer to a particular $KS$-partition of $G$. The next theorem  follows from the work of Hammer and Simeone \cite{HaSi81} and appears in \cite{Go80}.  
  We include a proof for completeness.
  
  \bigskip
  
 
 \begin{Thm} {\rm (Hammer and Simeone \cite{HaSi81})}
 For any $KS$-partition of a split graph $G$, exactly one of the following holds:
 
 (i)  $|K| = \omega(G)$ and $|S| = \alpha(G)$.  \hfill (balanced)
 
 (ii)  $|K| = \omega(G)-1$ and $|S| = \alpha(G)$. \hfill (unbalanced, $S$-max)

(iii)  $|K| = \omega(G)$ and $|S| = \alpha(G)-1$. \hfill (unbalanced, $K$-max)

\smallskip

Moreover, in (ii) there exists $s \in S$ so that $K \cup \{s\}$ is complete and in  (iii) there exists $k \in K$ so that $S \cup \{k\}$ is a stable set.

\label{split-thm}
 \end{Thm}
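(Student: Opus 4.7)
The plan is to start from the four numerical inequalities that a $KS$-partition must satisfy, eliminate the illegal combination by a counting argument, and then locate a maximum clique or maximum stable set explicitly to get the ``moreover'' statements.

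First I would record the obvious lower bounds $|K|\le \omega(G)$ (since $K$ itself is a clique) and $|S|\le \alpha(G)$ (since $S$ itself is a stable set), together with the matching upper bounds $\omega(G)\le |K|+1$ and $\alpha(G)\le |S|+1$. The upper bounds hold because a maximum clique can contain at most one vertex of the stable set $S$, and a maximum stable set can contain at most one vertex of the clique $K$. Thus $|K|\in\{\omega(G)-1,\omega(G)\}$ and $|S|\in\{\alpha(G)-1,\alpha(G)\}$, which gives four a priori combinations; the three listed in the theorem are mutually exclusive as soon as we show at most one of them occurs, so the task reduces to ruling out $|K|=\omega(G)-1$ and $|S|=\alpha(G)-1$ simultaneously.

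To eliminate that last case, I would use the well-known fact that in any graph a maximum clique $K^{*}$ and a maximum stable set $S^{*}$ can share at most one vertex. Hence
\[
\omega(G)+\alpha(G)=|K^{*}|+|S^{*}|\le |K^{*}\cup S^{*}|+1\le |V(G)|+1=|K|+|S|+1.
\]
If both $|K|=\omega(G)-1$ and $|S|=\alpha(G)-1$ held, the left side would equal $|K|+|S|+2$, a contradiction. So exactly one of (i), (ii), (iii) occurs.

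For the ``moreover'' statements I would analyze where a maximum clique sits in case (ii). Let $K^{*}$ be a clique with $|K^{*}|=\omega(G)=|K|+1$. Because $K^{*}\cap K$ is a clique contained in $K$ it has size at most $|K|$, and because $K^{*}\cap S$ is contained in the stable set $S$ it has size at most $1$, so equality in $|K^{*}|\le |K|+1$ forces $K^{*}\cap K=K$ and $|K^{*}\cap S|=1$. Writing $\{s\}=K^{*}\cap S$ then gives $K\cup\{s\}=K^{*}$, which is complete. The argument for (iii) is completely symmetric, applied to a maximum stable set $S^{*}$, and produces the desired $k\in K$ with $S\cup\{k\}$ stable. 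The only delicate point in the whole argument is the clique--stable set intersection bound used to kill the fourth case; everything else is bookkeeping on the two inequality pairs.
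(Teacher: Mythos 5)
Your proof is correct, and it reorganizes the argument in a way that differs from the paper's. The paper proceeds by direct case analysis: if $K$ is not maximum it immediately exhibits a vertex $s\in S$ adjacent to all of $K$ (since a maximum clique meets $S$ in at most one vertex and hence must contain all of $K$ plus one such $s$), and then uses that same $s$ to show no larger stable set exists, so $S$ is automatically maximum --- the ``moreover'' clause and the exclusion of the fourth case $(|K|=\omega(G)-1,\ |S|=\alpha(G)-1)$ come out of a single construction. You instead decouple the two tasks: you first kill the fourth case purely numerically via the general bound $\omega(G)+\alpha(G)\le |V(G)|+1$ (a clique and a stable set meet in at most one vertex), and only afterwards locate $s$ (resp.\ $k$) by the forcing argument $|K^{*}|\le |K^{*}\cap K|+|K^{*}\cap S|\le |K|+1$ with equality. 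Both proofs rest on the same elementary intersection fact; yours is more symmetric and makes the trichotomy a consequence of a graph-theoretic inequality that holds for all graphs, while the paper's is more economical in that the witness $s$ does double duty. Either is acceptable; no gaps.
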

 
 \begin{proof}
 Partition the vertex set of $G$ as $V(G) = K \cup S$ where $K$ is a clique and $S$ is a stable set.  If both $K$ and $S$ are maximum size then $|K| =    \omega(G)$ and $|S| = \alpha(G)$, resulting in case (i).  If $K$ is not maximum size, then $\omega(G) = |K| + 1$ because only one vertex of $S$ can be part of a clique.  In this case,  there must exist $s \in S$   adjacent to each vertex in $K$.    Then  no vertex of $K$ can be added to $S$ to make a larger stable set, and at most one vertex of $K$ can be in any stable set, so $S$ is maximum size.  Thus $|S| = \alpha(G)$, resulting in case (ii), and moreover, $K \cup \{s\}$ is complete.   Similarly, if $S$ is not maximum size, the result is case (iii).   
 \end{proof}

In Proposition~\ref{type3-rem}, we showed which NG-graphs are split graphs.  In the next theorem we show which split graphs are NG-graphs.
 \begin{Thm}  
 \label{NG-unbalanced}
 The following are equivalent for    a split graph $G$. \\
 (1)  $G$ is an NG-graph.\\
 (2)  $G \in$ \NGonecuptwo. \\
 (3)  $G$ is unbalanced.
   \label{unbal-thm}
\end{Thm}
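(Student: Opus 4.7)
The plan is to prove the chain $(1)\Leftrightarrow (2)\Leftrightarrow (3)$. The first equivalence $(1)\Leftrightarrow (2)$ is essentially a restatement of Proposition~\ref{type3-rem}: that proposition already tells us an NG-graph is a split graph if and only if it lies in \NGonecuptwo, so for a split graph $G$, being an NG-graph is equivalent to $G\in$ \NGonecuptwo. No real work is needed here beyond quoting the proposition.

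For $(1)\Leftrightarrow (3)$ the strategy is to compute $\chi(G)+\chi(\gbar)$ entirely in terms of $\omega(G)$ and $\alpha(G)$. Split graphs are perfect, so $\chi(G)=\omega(G)$; and the class is closed under complementation (just swap the $K$ and $S$ parts of any $KS$-partition), so $\chi(\gbar)=\omega(\gbar)=\alpha(G)$. Therefore
$$\chi(G)+\chi(\gbar)=\omega(G)+\alpha(G),$$
and so $G$ is an NG-graph if and only if $\omega(G)+\alpha(G)=|V(G)|+1$.

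Next I would convert this numerical condition into the balanced/unbalanced dichotomy using Theorem~\ref{split-thm}. If $G$ is balanced, a $KS$-partition witnessing balance gives $\omega(G)+\alpha(G)=|K|+|S|=|V(G)|$, so $G$ is not an NG-graph. If $G$ is unbalanced, then by definition no $KS$-partition realizes case (i) of Theorem~\ref{split-thm}; hence any fixed $KS$-partition lies in case (ii) or (iii), each of which yields $|K|+|S|=\omega(G)+\alpha(G)-1$, so $\omega(G)+\alpha(G)=|V(G)|+1$ and $G$ is an NG-graph. These two cases together complete $(1)\Leftrightarrow (3)$.

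I do not expect a real obstacle here: all the content is already packaged inside Proposition~\ref{type3-rem} and Theorem~\ref{split-thm}, and the key conceptual step is simply the identity $\chi(G)+\chi(\gbar)=\omega(G)+\alpha(G)$ for split graphs. The only small care to take is that Theorem~\ref{split-thm} is stated for an individual $KS$-partition rather than for the graph itself, so when $G$ is unbalanced one must observe that no partition lands in case (i), which is exactly the definition of unbalanced.
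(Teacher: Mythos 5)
Your proposal is correct and follows essentially the same route as the paper: both reduce everything to Proposition~\ref{type3-rem}, the perfection of split graphs (giving $\chi(G)=\omega(G)$ and $\chi(\gbar)=\alpha(G)$), and the case analysis of Theorem~\ref{split-thm}; the only difference is organizational, since the paper runs the cycle $(1)\Rightarrow(2)\Rightarrow(3)\Rightarrow(1)$ while you prove $(1)\Leftrightarrow(2)$ and $(1)\Leftrightarrow(3)$ separately. Your explicit remark that ``unbalanced'' means \emph{no} $KS$-partition lands in case (i) of Theorem~\ref{split-thm} is exactly the right point of care, and your justification of $\chi(\gbar)=\alpha(G)$ via closure under complementation is if anything slightly more careful than the paper's.
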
 
 
\begin{proof}
\noindent
(1) $\Longrightarrow$ (2).  Follows directly from Proposition~\ref{type3-rem}.  \\

\noindent
(2) $\Longrightarrow$ (3). For a contradiction, assume $G$ is 
  a balanced split graph and fix a  a $KS$-partition with $|K| = \omega(G)$ and $|S| = \alpha(G)$.  Since split graphs are perfect,  $\chi(G) = \omega(G)$ and $\chi(\gbar) = \alpha(G)$. Thus $\chi(G) + \chi(\gbar) = \omega(G) + \alpha(G) = |K| + |S| = |V(G)| \neq  |V(G)|+1$ and $G$ is not an NG-graph.\\
  
\noindent
(3) $\Longrightarrow$ (1).   Let $G$ be an unbalanced split graph and fix  a   $KS$-partition of $G$.   First consider the case in which the $KS$-partition is $K$-max, thus  $|K| = \omega(G)$ and by Theorem~\ref{split-thm}, $|S| = \alpha(G) -1$.  Again, since split graphs are perfect,
so $\chi(G) = \omega(G)$ and $\chi(\gbar) =   \alpha(G)$.    
Then,
$\chi(G) + \chi(\gbar) = \omega(G) + \alpha(G) = |K| +  |S| +1= |V(G)|+1$ and $G$ is an NG-graph.    The proof for a $S$-max $KS$-partition is similar.    
 \end{proof}
 
The next remark follows from Proposition~\ref{type3-rem} and Theorem~\ref{unbal-thm}.  The Venn diagram in Figure~\ref{venn-fig} shows the relationships we have proven about NG-graphs and split graphs. 

\begin{remark}
  $\mathcal{U} =$ \NGonecuptwo, and consequently,  pseudo-split graphs are either NG-graphs or balanced split graphs.  
\label{Un-rem}
\end{remark}


 Our knowledge of NG-graphs allows us to refine the Hammer/Simeone conditions in Theorem~\ref{split-thm}.
In particular, we can characterize all $KS$-partitions of a split graph and for split graphs with more than one $KS$-partition (unbalanced) it is precisely the vertices in $A_G$ that can be moved between $K$ and $S$.
 

 \begin{Thm}  
 Suppose $G$ is an unbalanced split graph  and let   $V(G) = A \cup B \cup C$ be its $ABC$-partition.   The $KS$-partitions of $G$ can be characterized as follows.  
 
 \begin{itemize}
\item If $G \in $ \NGone, the partitions are 
 
 $K = A \cup B$,  $S = C$ (unique $K$-max), 
 
 $K = (A \cup B) - \{a\}$, $S = C \cup \{a\}$ for any $a \in A$ ($S$-max).
 
\item If $G \in $ \NGtwo, the partitions are

    $K =  B$,  $S = A \cup C$ (unique $S$-max) 
    
     $K =   B \cup \{a\}$, $S = (A \cup C) - \{a\}$  for any $a \in A$ ($K$-max).
     
     \end{itemize}
  
 \label{num-part-thm}
 \end{Thm}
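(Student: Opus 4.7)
The plan is to verify that the listed partitions are valid $KS$-partitions of the claimed types, then to use Theorem~\ref{split-thm} to rule out all others. For verification, Theorem~\ref{NG-charac} makes $A \cup B$ induce a clique in NG-1 (both $G[A]$ and $G[B]$ are cliques joined by all $A$-$B$ edges) and $C$ a stable set, so $(A \cup B, C)$ is a $KS$-partition, and moving any $a \in A$ to the stable side preserves both properties because subsets of cliques are cliques and $a$ has no neighbor in $C$ by Theorem~\ref{NG-charac}(v). Symmetric reasoning with $G[A]$ a stable set handles NG-2. Labeling which partitions are $K$-max or $S$-max then follows by comparing cardinalities against $\omega(G)$, which equals $|A \cup B|$ in NG-1 and $|B|+1$ in NG-2 since $G$ is perfect.

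For uniqueness, I would fix an arbitrary $KS$-partition $V(G) = K' \cup S'$. Because $G$ is unbalanced, Theorem~\ref{split-thm} places this in the $K$-max or $S$-max case, and in either case $K'$ together with at most one added vertex $s \in S'$ forms a maximum clique of $G$. The crucial sublemma is therefore to identify every maximum clique of $G$. The degree data packaged in the $ABC$-partition, combined with $\chi(G) = \omega(G)$ (since $G$ is a split graph, hence perfect), give that $A$-vertices have degree exactly $\omega(G) - 1$ while $C$-vertices have degree strictly less, so no $C$-vertex can lie in a maximum clique. Together with the structure of $G[A]$, this identifies the unique maximum clique in NG-1 as $A \cup B$, and the maximum cliques in NG-2 as the family $\{B \cup \{a\} : a \in A\}$.

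From here the $K$-max case immediately yields the listed partitions by reading off $K'$ as a maximum clique. I expect the main obstacle to be the $S$-max case, where I must force the extending vertex $s$ to lie in $A$ rather than $B$. The key observation is that each $B$-vertex has degree strictly greater than $\omega(G) - 1$ yet is already adjacent to all of $(A \cup B) - \{b\}$, so it must have at least one $C$-neighbor; since $C \subseteq S'$ in the $S$-max case (because $K' \subseteq A \cup B$), placing $s$ in $B$ would put a neighbor of $s$ inside $S'$ and destroy stability. Hence $s \in A$, and the listed $S$-max partitions fall out. In NG-2, the equation $K' \cup \{s\} = B \cup \{a\}$ together with $s \in A$ further forces $s = a$ and $K' = B$, collapsing the $S$-max case to the single partition $K = B$, $S = A \cup C$.
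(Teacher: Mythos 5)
Your overall strategy is essentially the paper's: both proofs reduce everything to degree information encoded in the $ABC$-partition together with Theorem~\ref{split-thm}. Your organization around the sublemma classifying all maximum cliques (no $C$-vertex has degree at least $\omega(G)-1$, so every maximum clique is $A\cup B$ in the \ngone case and $B\cup\{a\}$ for $a\in A$ in the \ngtwo case) is a clean way to dispose of the $K$-max partitions, and your verification that the listed partitions are valid is fine.

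There is, however, a genuine flaw in your $S$-max analysis: the claim that every $B$-vertex has a neighbor in $C$ is false for \ngtwo graphs. Your inference is that $deg(b)>\omega(G)-1$ while $b$ is already adjacent to all of $(A\cup B)-\{b\}$ forces a $C$-neighbor. This works for \ngone graphs, where $|(A\cup B)-\{b\}|=|A|+|B|-1=\omega(G)-1$; but for \ngtwo graphs $\omega(G)-1=|B|$ while $b$ already has $|A|+|B|-1$ neighbors inside $A\cup B$, so when $|A|\ge 2$ the degree bound is satisfied automatically and forces nothing about $C$. Concretely, the path $a_1$--$b$--$a_2$ is an \ngtwo graph with $A=\{a_1,a_2\}$, $B=\{b\}$, $C=\emptyset$, and $b$ has no $C$-neighbor; note that the paper's Lemma~\ref{glorious-lem} asserts the ``every $B$-vertex has a $C$-neighbor'' property only for \ngone graphs, for exactly this reason. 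The conclusion you want (that the extending vertex $s$ cannot lie in $B$) is still true, but in the \ngtwo case it needs a different justification: if $s\in B$ then $S'=(A-\{a\})\cup C\cup\{s\}$, and since $s$ is adjacent to every vertex of $A$ by Theorem~\ref{NG-charac}(iv), $S'$ is not stable whenever $|A|\ge 2$; when $|A|=1$ your degree count does yield a $C$-neighbor of $s$ and finishes the argument. With that repair the proof goes through. The paper handles the corresponding point by noting that a $B$-vertex placed on the stable side would be forced to have degree exactly $\chi(G)-1$, contradicting the definition of $B$.
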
 
 
 \begin{proof}
 Since $G$ is an unbalanced split graph, we know that $G$ is also an NG-graph by Theorem~\ref{NG-unbalanced}.   Indeed, by Proposition~\ref{type3-rem}, it is an \ngone graph or an \ngtwo graph. 
 We will give the argument in the case in which $G$ is an \ngone graph, that is, $G[A]$ is a clique.   The case in which $G$ is an \ngtwo  graph is similar.
 
  Let $K = A \cup B$ and $S = C$.  We claim that $|K| = \omega(G)$, that is, this is a $K$-max partition of $G$.    At most one vertex of stable set $C$ can be in any clique of $G$ but no vertex of $C$ can be added to $K$ because vertices of $C$ are not adjacent to vertices in $A$ and $A \neq \emptyset$.  Thus $|K| = \omega(G)$ as desired and by Theorem~\ref{split-thm}, $|S| = \alpha(G) -1$.
 
 We next show that this is the only $K$-max partition of $G$.  As before, at most one vertex of $C$ can be in a clique, so if there were a different $K$-max partition, we would have to move one vertex $c \in C$ from $S$ to the clique $K$ and remove one vertex from $K = A \cup B$ to $S$.  Since vertices in $C$ are not adjacent to vertices in $A$, we must remove all vertices in $A$ from $K$ in order to retain a clique, thus $|A| = 1$.    But in this case, the vertex $c$ and the vertex $a \in A$ have the same neighbor set (namely, all vertices in $B$) and thus the same degree, violating Definition~\ref{ABC-def}.  
 
 Next we  characterize the  $S$-max partitions.  Take any $a \in A$ and let $K' = (A   \cup B)-  \{a\} $ and  $S' = C \cup \{a\}$.  Thus $V(G) = K' \cup S'$.  We know $K'$ is a clique and $S'$ is a stable set by Theorem~\ref{NG-equal-thm}.  Furthermore,    $|S'| = |S| + 1 = \alpha(G)$ so the partition $V(G) = K' \cup S'$ is $S$-max for each $a \in A$.    Finally, we show these are the \emph{only} $S$-max partitions of $G$.  Since $A \cup B$ is a clique in $G$, at most one vertex of $A \cup B$ can be in any stable set.  If a vertex $b \in B$ were added to $S$ to form a larger stable set, then $b$ would have the same degree (namely $|A| + |B| -1$) as each vertex in $A$, violating Definition~\ref{ABC-def}. 
 \end{proof}
 
 The next corollary follows directly from Theorem~\ref{num-part-thm} and allows us to conclude that there are exactly $(|A| + 1)$ $KS$-partitions of an unbalanced labeled split graph. In contrast, there is a unique $KS$-partition of a balanced split graph as shown in Proposition~\ref{unique-bal-prop}.
 
 \begin{Cor}
 Suppose $G$ is an unbalanced split graph and let $V(G) = A \cup B \cup C$ be its $ABC$-partition.   Then if we consider $G$ to be a labelled graph, it has either a unique $K$-max partition and exactly $|A|$ distinct $S$-max partitions or a unique $S$-max partition and exactly $|A|$ distinct $K$-max partitions.
 \end{Cor}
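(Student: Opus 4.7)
The corollary is essentially a counting corollary to Theorem~\ref{num-part-thm}, so my plan is to invoke that theorem directly and then observe that the listed $KS$-partitions, viewed as partitions of a labelled vertex set, are pairwise distinct. First I would note that since $G$ is an unbalanced split graph, Theorem~\ref{NG-unbalanced} guarantees $G$ is an NG-graph, and Proposition~\ref{type3-rem} further places $G$ in \NGonecuptwo. So the proof splits into two cases according to whether $G \in$ \NGone\ or $G \in$ \NGtwo, and by symmetry it suffices to treat one of them.

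In the \NGone\ case, Theorem~\ref{num-part-thm} tells us that $K = A \cup B$, $S = C$ is the unique $K$-max partition, and that for every $a \in A$, the pair $K = (A \cup B) - \{a\}$, $S = C \cup \{a\}$ is an $S$-max partition. I would then observe that, because we regard $G$ as labelled, two such $S$-max partitions indexed by distinct $a, a' \in A$ differ in which labelled vertex lies in $S$, hence are genuinely distinct partitions. This yields exactly $|A|$ distinct $S$-max partitions. The \NGtwo\ case is handled by the same argument with the roles of $K$ and $S$ interchanged, producing a unique $S$-max partition and exactly $|A|$ distinct $K$-max partitions.

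The only potential subtlety — and really the only step that does more than quote Theorem~\ref{num-part-thm} — is the distinctness claim, but this is immediate once we take labelled vertices seriously: the partitions differ in which single vertex of $A$ is assigned to the stable set (resp.\ the clique). Thus I do not expect any genuine obstacle; the corollary is a direct bookkeeping consequence of the classification of $KS$-partitions already established.
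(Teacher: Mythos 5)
Your proposal is correct and matches the paper's treatment: the paper gives no separate proof, stating only that the corollary ``follows directly from Theorem~\ref{num-part-thm},'' which is precisely your argument of quoting that theorem's classification and noting that the $|A|$ listed partitions are pairwise distinct as partitions of a labelled vertex set. No gaps.
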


 \begin{Prop} \label{unique-bal-prop} 
 Each balanced split graph has a unique $KS$-partition.
 \end{Prop}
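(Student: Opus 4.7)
My plan is to proceed by contradiction: assume $G$ is a balanced split graph with two distinct $KS$-partitions $V(G)=K_1\cup S_1=K_2\cup S_2$ and derive either a clique larger than $\omega(G)$ or a stable set larger than $\alpha(G)$.

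First I would observe that \emph{every} $KS$-partition of a balanced split graph must satisfy $|K|=\omega(G)$ and $|S|=\alpha(G)$. Since $G$ is balanced, some $KS$-partition has these sizes, forcing $|V(G)|=\omega(G)+\alpha(G)$. But cases (ii) and (iii) of Theorem~\ref{split-thm} each give $|K|+|S|=\omega(G)+\alpha(G)-1<|V(G)|$, which is impossible for a partition of $V(G)$. So every $KS$-partition of $G$ is of type (i), and in particular $|K_1|=|K_2|=\omega(G)$ and $|S_1|=|S_2|=\alpha(G)$.

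The heart of the argument is the following squeeze. Let $D=K_1\setminus K_2$ and $E=K_2\setminus K_1$. Since $D\subseteq K_1$, the set $D$ induces a clique; since $D\subseteq S_2$, it also induces a stable set. Hence $|D|\le 1$, and symmetrically $|E|\le 1$. The equality $|K_1|=|K_2|$ forces $|D|=|E|$, so if the partitions differ we may write $D=\{v\}$ and $E=\{u\}$. Both $u$ and $v$ are adjacent to every vertex of $K_1\cap K_2$ (using that $K_2$, respectively $K_1$, is a clique), so the only remaining degree of freedom is the single edge $uv$: if $uv\in E(G)$ then $K_1\cup\{u\}$ is a clique of size $\omega(G)+1$, while if $uv\notin E(G)$ then $S_1\cup\{v\}$ is a stable set of size $\alpha(G)+1$. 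Either outcome contradicts the choice of $\omega(G)$ or $\alpha(G)$, so $K_1=K_2$ and the partition is unique.

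The main obstacle is spotting the clique/stable-set squeeze that bounds $|D|$ and $|E|$ by $1$; once that is in hand, the case split on the edge $uv$ is immediate, and no machinery beyond Theorem~\ref{split-thm} is required.
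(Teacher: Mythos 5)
Your proof is correct and takes essentially the same route as the paper: both arguments reduce two distinct $KS$-partitions to a single swapped pair of vertices (via the observation that a set contained in both a clique and a stable set has at most one element) and then derive a contradiction with $\omega(G)$ or $\alpha(G)$ from the edge status of that pair. The only cosmetic difference is that you split symmetrically on whether $uv\in E(G)$, while the paper first deduces $xy\notin E(G)$ and then exhibits the oversized stable set.
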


  \begin{proof}
  Let $G$ be a balanced split graph and fix a $KS$-partition of $G$ with $|K| = \omega(G)$ and $|S| = \alpha(G)$.  For a contradiction, suppose  $K'S'$ is a different $KS$-partition of $G$.  Since $K$ and $S$ are already of maximum size and at most one vertex of $S$ can be in a clique and at most one vertex of $K$ can be in a stable set, we know $K' = K \cup \{y\} - \{x\}$ and $S' = S \cup \{x\} - \{y\}$ for some $x \in K$ and $y \in S$.  Now $K$ and $K'$ are cliques but $K \cup \{y\}$ has $\omega(G) + 1$ vertices and is not a clique, so $xy \not\in E(G)$.  But then $x$ is not adjacent to any vertex in $S$, so $S \cup \{x\}$ is a stable set of size $\alpha(G) + 1$, a contradiction.
   \end{proof}
 

 We use the term ``unlabeled graphs'' in the remainder of the paper to refer to 
  isomorphism classes of graphs as in \cite{We01}.

\begin{Cor} \label{two-unbal-prop} Each unlabeled unbalanced split graph has exactly two   $KS$-partitions, one is $K$-max and the other is $S$-max.
\end{Cor}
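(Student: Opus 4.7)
The plan is to apply Theorem~\ref{num-part-thm} and observe that, for an unbalanced split graph $G$, the vertices of $A_G$ are pairwise interchangeable by automorphisms of $G$, so the $|A|$ partitions of the non-unique type collapse to a single partition once we pass to the isomorphism class. Combined with the unique partition of the other type, this yields exactly two $KS$-partitions of the unlabeled graph.

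More concretely, by Theorem~\ref{NG-unbalanced} we have $G \in $ \NGonecuptwo, so Theorem~\ref{num-part-thm} applies and lists the $|A|+1$ labeled $KS$-partitions of $G$: a unique one of one type (either $K$-max or $S$-max), together with $|A|$ partitions of the opposite type, each obtained by choosing a vertex $a \in A$ and moving it between $K$ and $S$. The first step is to check that any two vertices $a,a' \in A$ are twins in $G$: by conditions (iv)--(v) of Theorem~\ref{NG-charac}, every vertex of $A$ is adjacent to all of $B$ and to none of $C$, and by condition (i), $G[A]$ is either a clique (NG-1) or a stable set (NG-2). In the NG-1 case, $N[a]=A\cup B=N[a']$ (true twins); in the NG-2 case, $N(a)=B=N(a')$ (false twins). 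In either case the transposition $(a\,a')$ extends to an automorphism of $G$.

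The second step is to observe that any two of the $|A|$ partitions of the non-unique type in Theorem~\ref{num-part-thm} differ only in which $a\in A$ was moved, and hence are related by such a transposition automorphism. Consequently they represent the same $KS$-partition of the underlying unlabeled graph. The unique partition of the other type is obviously unchanged by any automorphism. Finally, these two unlabeled partitions are genuinely distinct, since the $K$-max and $S$-max partitions have different values of $|K|$ by Theorem~\ref{split-thm}. Altogether, the unlabeled unbalanced split graph has exactly two $KS$-partitions, one $K$-max and one $S$-max.

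The only mildly nontrivial step is verifying the twin property of $A$; everything else is bookkeeping on top of Theorem~\ref{num-part-thm}. The twin observation itself is a direct reading of Theorem~\ref{NG-charac}(i), (iv), (v), so no genuine obstacle is anticipated.
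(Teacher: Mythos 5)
Your proof is correct, and it reaches the same punchline as the paper --- a transposition of two vertices with identical neighborhoods is an automorphism carrying one labeled partition to the other --- but by a different route. The paper's proof is self-contained in split-graph language: it gets existence of both a $K$-max and an $S$-max partition from the ``moreover'' clause of Theorem~\ref{split-thm}, and for uniqueness it supposes two non-isomorphic $K$-max partitions, deduces that they must differ by a single swap $K_2 = K_1 \cup \{x\} - \{y\}$, derives $N(x)=N(y)$ from that, and concludes with the swapping automorphism; it never invokes the $ABC$-partition. You instead start from the complete inventory of labeled $KS$-partitions in Theorem~\ref{num-part-thm} (via Theorem~\ref{NG-unbalanced}) and observe that all of $A_G$ consists of mutual twins by Theorem~\ref{NG-charac}(i),(iv),(v), so the $|A|$ partitions of the non-unique type collapse to one isomorphism class while the $|K|$ count separates the two types. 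Given that Theorem~\ref{num-part-thm} has already been proved at this point in the paper, your argument is a legitimate and arguably more transparent corollary of it --- it exhibits explicitly which vertices are interchangeable rather than arguing by contradiction --- at the cost of depending on the NG-1/NG-2 classification, whereas the paper's version would survive even if one wanted to state the corollary purely as a fact about split graphs.
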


\begin{proof}
Let $G$ be an unbalanced split graph.  Fix a $KS$-partition of $G$.  If it is $S$-max, then we can move a vertex from $S$ to $K$, as in the proof of Theorem \ref{split-thm}, to obtain a different $KS$-partition of $G$ which is $K$-max.  Similarly, if it is $K$-max, we can move a vertex from $K$ to $S$ to obtain a different $KS$-partition of $G$ which is $S$-max.

Now suppose there are two non-isomorphic $K$-max partitions of $G$, $K_1S_1$, $K_2S_2$.  Thus $|K_1|=|K_2|=\omega(G)$, but $K_1\neq K_2$.  Since at most one vertex of $S_1=V(G)-K_1$ can be in a clique, there exists $x\in S_1$ and $y\in K_1$ such that $K_2=K_1+\{x\}-\{y\}$.  Thus, $N(x)=K_1-\{x,y\}=N(y)$.  Then there is an automorphism of $G$ obtained by switching vertices $x$ and $y$ and leaving the remaining vertices unchanged.  This contradicts our assumption that $K_1S_1$ and $K_2S_2$ are not isomorphic.  The proof for two non-isomorphic $S$-max partitions is similar.
\end{proof}

 \section{Degree sequence characterizations}
 
 We begin with the Hammer and Simeone result showing that    split graphs  can be recognized  solely from  their degree sequences. 
 The proof serves as a foundation for the proofs of Theorems~\ref{split-gr-prop}  and \ref{not-split-gr-prop}. 
 
  
 \begin{Thm} {\rm  (Hammer and Simeone, \cite{HaSi81})}
 Let $G = (V,E)$ be a graph with degree sequence $d_1 \ge d_2 \ge \cdots \ge d_n$ and let $m = max\{i:d_i \ge i-1\}.$  Then $G$ is a split graph if and only if 
 \begin{equation}
  \sum_{i=1}^m d_i = m(m-1) + \sum_{i = m+1}^n d_i. 
  \label{degree-condition}
 \end{equation}
 
 Furthermore, if equality holds in (\ref{degree-condition}),  then $\omega(G) = m$.
 \label{ham-sim-thm}
 \end{Thm}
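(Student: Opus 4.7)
The plan is built around the bookkeeping identity
\[
  \sum_{v\in V_1} d_v \;-\; \sum_{v\in V_2} d_v \;=\; 2\bigl(e(V_1) - e(V_2)\bigr),
\]
which holds for any bipartition $V(G)=V_1\cup V_2$, where $e(V_i)$ denotes the number of edges with both endpoints in $V_i$. This is immediate from double counting edges according to the sides of their endpoints, and it will reduce the split-graph criterion to an edge-count inside and outside the ``top-$m$'' block. I would first note, as a warm-up, that $\{i:d_i\ge i-1\}=\{1,\dots,m\}$ is indeed an initial segment (by monotonicity of $d_1\ge\cdots\ge d_n$), so $m$ is a well-defined threshold.

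For the $(\Leftarrow)$ direction, let $V_1$ be any set of $m$ vertices with the $m$ largest degrees (ties broken arbitrarily) and $V_2$ the remaining $n-m$. The equation in the theorem becomes exactly $e(V_1)-e(V_2)=\binom{m}{2}$. Since $e(V_1)\le\binom{m}{2}$ and $e(V_2)\ge 0$, both bounds must be tight, forcing $V_1$ to be a clique and $V_2$ a stable set. Hence $G$ is split and $\omega(G)\ge m$. For the ``furthermore'' clause, any clique of size $m{+}1$ would need a vertex of $V_2$ adjacent to $m$ other vertices, but by the maximality of $m$ every vertex of $V_2$ has degree at most $d_{m+1}\le m-1$; so $\omega(G)=m$.

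For the $(\Rightarrow)$ direction, suppose $G$ is split with maximum clique $K$ of size $k=\omega(G)$ and complementary stable set $S$. Every vertex of $K$ has degree $\ge k-1$, and every $s\in S$ has degree $\le k-1$ (otherwise $\{s\}\cup N(s)$ would form a clique of size $\ge k+1$ inside $K\cup\{s\}$). These bounds yield $d_k\ge k-1$ and $d_{k+1}\le k-1<k$, so $m=k$. A small check shows that although ties at degree $k-1$ may occur between $K$- and $S$-vertices, the sums $\sum_{i=1}^m d_i$ and $\sum_{i=m+1}^n d_i$ do not depend on tie-breaking and equal $\sum_{v\in K} d_v$ and $\sum_{v\in S} d_v$ respectively. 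Applying the bookkeeping identity with $V_1=K$, $V_2=S$ gives $e(K)-e(S)=\binom{k}{2}-0$, which rearranges to the desired equation.

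The main obstacle, and a mild one, is the tie-breaking in the forward direction: one must verify that the sum of the top $m$ degrees depends only on the multiset of degrees, not on which specific vertices of degree $m-1$ are placed in $V_1$. Once this bookkeeping is handled, the rest is a clean application of the identity above together with the two elementary bounds $e(V_1)\le\binom{m}{2}$ and $e(V_2)\ge 0$.
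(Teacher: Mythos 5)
Your proof is correct and follows essentially the same route as the paper: both directions rest on the same edge-count decomposition (your single difference identity is just the paper's two equations $\sum_{i\le m} d_i = 2|E_1|+|E_2|$ and $\sum_{i>m} d_i = 2|E_3|+|E_2|$ subtracted), with the tightness of $e(V_1)\le\binom{m}{2}$ and $e(V_2)\ge 0$ forcing the clique/stable-set structure. Your explicit treatment of the tie-breaking at degree $m-1$ in the forward direction is a small improvement over the paper's ``without loss of generality''; the only thing you take for granted that the paper justifies (by its Theorem on $KS$-partitions) is that a split graph always admits a partition in which $K$ is a \emph{maximum} clique.
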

 
 \begin{proof}
 Let $V(G)= \{v_1,v_2, \ldots, v_n\}$ where $deg(v_i) = d_i$ for each $i$.  Since $d_1 \ge 0$, the value $m$ is well-defined.  If $G=K_n$ then $m=n$, $G$ is a split graph,  equality (\ref{degree-condition}) holds  and $\omega(G) = m$ as desired.  Thus we may assume $G$ is not complete, thus $d_n < n-1$ and $m \le n-1$.  
 
 First  we assume that $G$ satisfies the equality (\ref{degree-condition}) in Theorem~\ref{ham-sim-thm}.  We let $K= \{v_1,v_2, \ldots v_m\}$ and $S = \{v_{m+1}, v_{m+2}, \ldots v_n\}$ and will show that $K$ is  a clique and $S$ is a stable set in $G$. Partition $E(G) = E_1 \cup E_2 \cup E_3$ where $E_1 = E(G[K])$,     $E_2 = \{xy:x \in K, y\in S\}$, and $E_3 = E(G[S])$.  Then $\dsp \sum_{i=1}^m d_i = 2|E_1| + |E_2|$ and $\dsp\sum_{i=m+1}^n d_i = 2|E_3| + |E_2|$.  Using equality (\ref{degree-condition}) we get $2|E_1| + |E_2| = m(m-1) + 2|E_3| + |E_2|$ or equivalently, $|E_1| = \frac{m(m-1)}{2} + |E_3|$.  Thus $|E_1| \ge \frac{m(m-1)}{2}.$  But $E_1$ is the edge set of the graph $G[K]$ with $m$ vertices, thus $|E_1| \le \frac{m(m-1)}{2}$ and we conclude $|E_1| = \frac{m(m-1)}{2}$ and $|E_3| = 0$.  Thus $K$ is a clique and $S$ a stable set as desired.
 
 Conversely, let $G$ be a split graph and by Theorem~\ref{split-thm}  we can fix a K-max partition of $G$.   As before, let $E_1$ be the edge set of $G[K]$, $E_2$ be the set of edges with one endpoint in $K$ and the other in $S$, and $E_3$ be the edge set of $G[S]$.  Since $K$ is a clique, $deg(v) \ge |K| -1$ for each $v \in K$ and since $S$ is a stable set and $K$ is a maximum size clique, $deg(v) \le |K| -1$ for each $v \in S$.  Thus $m=k$ and  without loss of generality, $K = \{v_1,v_2, \ldots, v_{|K|}\}$ and $S = \{v_{|K|+1}, v_{|K|+2}, \ldots, v_n\}$.  Now $\dsp \sum_{i=1}^m d_i = 2|E_1| + |E_2| = m(m-1) +|E_2|$ and 
 $\dsp \sum_{i=m+1}^n d_i = 2|E_3| + |E_2| = |E_2|$ since $K$ is a clique and $S$ a stable set.  Thus equality holds in $(\ref{degree-condition})$.
\end{proof}

\begin{remark}
\label{k-max-part}
In the proof of Theorem~\ref{ham-sim-thm}, the partition $K = \{v_1,v_2, \ldots, v_m\}$, $S = \{v_{m+1},v_{m+2}, \ldots, v_n\}$ is a $K$-max partition of $G$.
\end{remark}
 
We will see  in Examples \ref{type1-ex} and \ref{type2-ex} that  index $m$ is either the first or last index $i$ for which $d_i=m-1$.  The next theorem shows that this is true for all split graphs.  
 
 
 \begin{Thm}   Let $G = (V,E)$ be a   graph with degree sequence $d_1 \ge d_2 \ge \cdots \ge d_n$ and  $m = max\{i:d_i \ge i-1\}.$ 
 If $d_{m-1} = d_m = d_{m+1}$ then $G$ is not a split graph. \label{middle-cor}
 \end{Thm}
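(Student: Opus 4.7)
The plan is to assume $G$ is a split graph and derive a contradiction by a short counting argument inside the $K$-max $KS$-partition provided by Hammer--Simeone.

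First I would pin down the common value of $d_{m-1}, d_m, d_{m+1}$. By the definition of $m$ we have $d_m \ge m-1$, and by the maximality of $m$ we have $d_{m+1} < (m+1)-1 = m$, i.e.\ $d_{m+1} \le m-1$. Since $d_m \le d_{m-1}$ and $d_m \ge d_{m+1}$, the assumption $d_{m-1}=d_m=d_{m+1}$ forces the common value to be exactly $m-1$.

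Next I would suppose for contradiction that $G$ is a split graph. Theorem~\ref{ham-sim-thm} then gives $\omega(G)=m$, and Remark~\ref{k-max-part} tells us that $K=\{v_1,\dots,v_m\}$, $S=\{v_{m+1},\dots,v_n\}$ is a $K$-max $KS$-partition of $G$, with $\deg(v_i)=d_i$. In particular $v_{m-1},v_m\in K$ while $v_{m+1}\in S$.

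The contradiction comes from counting missing edges between $\{v_{m-1},v_m\}$ and $v_{m+1}$. Since $K$ is a clique of size $m$, each of $v_{m-1}$ and $v_m$ is already adjacent to the other $m-1$ vertices of $K$. Their total degrees are $m-1$, so neither has any neighbor in $S$; in particular, neither is adjacent to $v_{m+1}$. On the other hand $v_{m+1}\in S$ has all its neighbors in $K$, and $\deg(v_{m+1})=m-1=|K|-1$, so $v_{m+1}$ misses exactly one vertex of $K$. Having both $v_{m-1}$ and $v_m$ non-adjacent to $v_{m+1}$ contradicts this.

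I do not expect a serious obstacle: once the common value is identified as $m-1$ and the Hammer--Simeone $K$-max partition is in place, the argument is a one-line double count. The only small subtlety is making clear that the definition of $m$ forces $d_m=m-1$ rather than something larger (if $d_m \ge m$, then $d_{m+1}\ge m>(m+1)-1$ would be impossible, using $d_{m+1}=d_m$).
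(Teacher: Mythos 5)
Your proposal is correct and follows essentially the same route as the paper's proof: identify the common value as $m-1$, invoke the Hammer--Simeone $K$-max partition $K=\{v_1,\dots,v_m\}$, $S=\{v_{m+1},\dots,v_n\}$, and derive the contradiction from $v_{m+1}$ needing $m-1$ neighbors in $K$ while $v_{m-1}$ and $v_m$ have no degree left for neighbors outside $K$. The only cosmetic difference is that you pin down $d_m=m-1$ directly from $d_m\ge m-1$ and $d_{m+1}\le m-1$, where the paper phrases it as a short case split.
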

 
 \begin{proof}
 For a contradiction, assume $G$ is a split graph.    If $d_m > m-1$ then by definition of $m$ we have $d_{m+1} \le m-1$ and we contradict $d_m = d_{m+1}$.  Otherwise, $d_m = m-1$.   Let $V(G) = \{v_1,v_2, \ldots, v_n\}$ where $deg(v_i) = d_i$ for each $i$.   As in the proof of 
 Theorem~\ref {ham-sim-thm}, vertices $v_1,v_2, \ldots, v_m$ form a maximum clique $K$ and vertices $v_{m+1}, v_{m+2}, \ldots, v_n$ form a stable set $S$.  Since $S$ is a stable set, $nbh(v_{m+1}) \subseteq K$ and $|nbh(v_{m+1})| = d_{m+1} = m-1$ so $v_{m+1}$ must be adjacent to $m-1$ vertices of $K$.  Thus $v_{m+1}$ is adjacent to at least one of $v_m,v_{m-1}$.  But vertices $v_m$ and $v_{m-1}$ each have degree $m-1$ and each already has $m-1$ neighbors in $K$, a contradiction.
\end{proof}
 
In Theorems~\ref{split-gr-prop} and \ref{not-split-gr-prop}, 
we characterize the graphs in \NGone, \NGtwo, and \NGthree \   by their degree sequences.

 \begin{Thm}
 Let $G = (V,E)$ be a   graph with degree sequence $d_1 \ge d_2 \ge \cdots \ge d_n$ and  $m = max\{i:d_i \ge i-1\}.$   Then $G\in$(\NGone)$_n$   if and only if  it satisfies
 
\begin{itemize}
\item[$(1)$]
 $\  \sum_{i=1}^m d_i = m(m-1) + \sum_{i = m+1}^n d_i$, and 
 \item[$(2)$] \ $d_m = m-1$ and $m$ is the largest index for which $d_i=m-1$.
 \end{itemize}
Similarly, $G\in $(\NGtwo)$_n$  if and only if it satisfies $(1)$ and 

\vspace{.03in}
\hspace{-.15in}$(2')$ $d_m=m-1$ and $m$ is the smallest  index $i$ for which $d_i = m-1$.
 
  \label{split-gr-prop}
  \end{Thm}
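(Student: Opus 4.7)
The plan is to apply Theorem~\ref{ham-sim-thm} to identify condition (1) with $G$ being a split graph, and then use Remark~\ref{k-max-part} together with Theorem~\ref{NG-unbalanced} to pin down which split graphs satisfy the additional degree conditions. Throughout, label the vertices so that $\deg(v_i) = d_i$. When (1) holds we have $\omega(G) = \chi(G) = m$, and $K = \{v_1, \ldots, v_m\}$, $S = \{v_{m+1}, \ldots, v_n\}$ is a $K$-max partition. Since $\chi(G) - 1 = m - 1$, the $ABC$-partition takes the convenient form $B = \{v_i : d_i > m - 1\}$, $A = \{v_i : d_i = m - 1\}$, $C = \{v_i : d_i < m - 1\}$, and because the degrees are non-increasing these are consecutive blocks.

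The key observation is that $v_m \in K$ already has $m - 1$ neighbors inside the clique $K$, so $d_m \geq m - 1$ with equality exactly when $v_m$ has no neighbors in $S$. If $d_m > m - 1$ then every vertex of $K$ has a neighbor in $S$, so no vertex can be moved to enlarge $S$; by Theorem~\ref{split-thm} the partition is balanced and by Theorem~\ref{NG-unbalanced} $G$ is not an NG-graph. If $d_m = m - 1$, then $S \cup \{v_m\}$ is a larger stable set, $G$ is unbalanced, and hence lies in NG-1 $\cup$ NG-2 by Proposition~\ref{type3-rem} and Theorem~\ref{NG-unbalanced}. So (1) combined with $d_m = m - 1$ is precisely what lands $G$ in NG-1 $\cup$ NG-2.

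To distinguish NG-1 from NG-2, note that the block $A$ is the run of indices with $d_i = m - 1$ and it contains $v_m$. If $m$ is the largest such index then $A \subseteq K$, so $G[A]$ is a subgraph of the clique $K$ and is itself a clique, placing $G$ in NG-1. If $m$ is the smallest such index then $A = \{v_m\} \cup (A \cap S)$; the vertices of $A \cap S$ are pairwise non-adjacent (as $S$ is stable) and $v_m$ has no neighbor in $S$ by Step 2, so $G[A]$ is a stable set, placing $G$ in NG-2. For the converse, if $G \in$ NG-1 then the unique $K$-max partition from Theorem~\ref{num-part-thm} is $K = A \cup B$, so the sorted order forces $A$ to occupy positions $|B|+1, \ldots, m$, making $m$ the largest index with $d_i = m - 1$. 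If $G \in$ NG-2 then the unique $S$-max partition $K = B$, $S = A \cup C$ together with $\omega(G) = |B| + 1 = m$ forces $A$ to occupy positions $m, \ldots, m + |A| - 1$, making $m$ the smallest such index.

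I do not anticipate a serious obstacle; the proof amounts to unpacking the $ABC$-partition in the canonical Hammer-Simeone ordering. The only subtle point is the degenerate case $|A| = 1$, where the block of $(m-1)$-degrees is the single position $m$ and conditions (2) and (2') coincide; this is consistent with the fact that a single-vertex $G[A]$ is vacuously both a clique and a stable set, so such a graph lies in both NG-1 and NG-2.
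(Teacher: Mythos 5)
Your proof is correct and follows essentially the same route as the paper's: both use Theorem~\ref{ham-sim-thm} and Remark~\ref{k-max-part} to equate condition $(1)$ with splitness and obtain the $K$-max ordering, identify $A$ with the block of degree-$(m-1)$ vertices, and invoke Theorem~\ref{num-part-thm} for the forward direction. Your explicit intermediate step that $d_m = m-1$ characterizes unbalancedness is exactly the content of the paper's Corollary~\ref{numerical}, so the only cosmetic difference is that the paper verifies the five conditions of Theorem~\ref{NG-charac} directly in the backward direction rather than routing through Theorem~\ref{NG-unbalanced}.
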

 
 \begin{proof}  Let $V(G) = \{v_1, v_2, \ldots, v_n\}$ where $deg(v_i) = d_i$ and let $A\cup B\cup C$ be the ABC-partition of $G$ First, we show that in both directions of the proof, $G$ must be a split graph.  
If $G\in$ \NGonecuptwo, then $G$ is a split graph by Proposition~\ref{type3-rem}.  Conversely, if $G$ satisfies (1), it is a split graph by Theorem~\ref{ham-sim-thm}.   Split graphs are perfect, so $\chi(G) = \omega(G)$.  

Let $K=\{v_1, v_2, \ldots, v_m\}$ and  $S=  \{v_{m+1}, v_{m+2}, \ldots, v_n\} $.  By   Theorem~\ref{ham-sim-thm} and Remark~\ref{k-max-part}, we know $K$ is a clique, $S$ is a stable set and $KS$ is a $K$-max partition of $G$.  Thus $\omega(G) = |K| = m$ and $\chi(G) = m$.  

For the forward direction, assume $G\in$ \NGonecuptwo. Then $G$ is an unbalanced split graph by Theorem~\ref{unbal-thm} and thus satisfies $(1)$ of Theorem~\ref{ham-sim-thm}. It remains to show that $G\in $ \NGone \ implies $(2)$ and  $G\in $ \NGtwo\  implies $(2')$. 

First assume $G\in$ \NGone. Then by Theorem~\ref{num-part-thm}, the partition $K=A\cup B$, $S=C$ is the unique $K$-max partition of $G$. Since $A\neq \emptyset$ for NG-graphs and $v_m$ has the smallest degree among vertices in $K$, we know that $v_m\in A$ and thus $d_m=\chi(G)-1=m-1$. Moreover, $v_{m+1}\in C$, thus by Definition~\ref{ABC-def}, $d_{m+1}<\chi(G)-1 = m-1$, proving $(2)$.

Now assume $G\in$ \NGtwo. Then by Theorem~\ref{num-part-thm}, any $K$-max partition of $G$ has the form $K= B\cup \{a\}$, $S=(A\cup C)-\{a\}$ for some $a\in A$. By Definition~\ref{ABC-def},
 $deg(a)=\chi(G)-1=m-1$ and $deg(b)>m-1$ for all $b\in B$. Hence $a$ is the unique vertex of smallest degree in $K$. Thus, $v_m=a$, and $deg(v_m) =m-1$ and $deg(v_{m-1})>m-1$, proving  $(2')$. 

Conversely,  assume $(1)$ holds. 
Since $\chi(G)=m$, $A$ is the set of vertices of degree $m-1$ in $G$. By Theorem~\ref{middle-cor}, $m$ is either the largest index for which $d_i=m-1$ or the smallest (or both).  

In the first instance, where $(2)$ holds, each of the vertices in $S$ has degree at most $m-2$ and $m-2  <\chi(G)-1$.  Hence $C =S $  and $A\cup B=K$. Each vertex in $A$ is adjacent to the other $m-1$ vertices in $K$, thus is not adjacent to any vertex in $C$, which proves that $G\in$ \NGone.
  
In the second instance, where $(2')$ holds, each of the vertices $v_1, v_2, \ldots, v_{m-1}$ has degree at least $m$ and $m>\chi(G)-1$.   Thus $B = \{v_1, v_2, \ldots, v_{m-1}\} $ and $B$ is contained in the clique $K$.  Since $K$ is a clique and $deg(v_m) = m-1$, vertex $v_m$ is adjacent to each of the vertices in $B$ and thus to no vertices in $S$.  Each $v\in S$ with $deg(v)=m-1$ is likewise adjacent to each vertex in $B$. Hence $A \cup C=S\cup \{v_m\}$ and  is a stable set. Thus, $G\in$ \NGtwo.
\end{proof}

The next two examples illustrate Theorem~\ref{split-gr-prop}.
 
 \begin{example} \label{type1-ex}
Let $G_1$ be the  \ngone graph whose ABC-partition is $A=\{v_4, v_5, v_6\}$, $B=\{v_1, v_2, v_3\}$ and $C=\{v_7, v_8, v_9, v_{10}\}$, and where the edge set between $B $ and $C$ is 
$E_2=\{ v_1v_7, v_1v_8, v_1v_9, v_2v_7, v_2v_8, v_3v_{10} \}$.   The table below shows the vertex degrees $d_i = deg(v_i)$.  Then  $m=6$ and as in Theorem \ref{split-gr-prop}, 6 is the largest index $i$ for which $d_i=m-1$.

\bigskip

\begin{center}
\begin{tabular}{|c|cccccc|cccc|} \hline 
$i$ & 1 & 2 & 3 & 4 & 5 & \underline{6} & 7 & 8 & 9 & 10  \\ \hline
$d_i$ & 8 & 7  & 6 & {\bf 5} & {\bf 5} & \underline{{\bf 5}} & 2 & 2 & 1 & 1 \\ \hline
$i-1$ & 0 & 1 & 2 & 3 & 4 & \underline{5} & 6 & 7  & 8 & 9  \\ \hline 
\end{tabular}
\bigskip

\end{center}

\end{example}

 \begin{example} \label{type2-ex} 
Let $G_2$ be the  \ngtwo graph formed by removing the 3 edges from $G_1$ between vertices in $A$.  Again, the table below shows the vertex degrees.  Now $m=4$ and indeed, $i=4$ is the smallest index $i$ for which $d_i=m-1$.

 \bigskip

\begin{center}

\begin{tabular}{|c|cccc|cccccc|} \hline 
$i$ & 1 & 2 & 3 & \underline{4} & 5 & 6 & 7 & 8 & 9 & 10  \\ \hline
$d_i$ & 8 & 7  & 6 & \underline{{\bf 3} }& {\bf 3} & {\bf 3} & 2 & 2 & 1 & 1 \\ \hline
$i-1$ & 0 & 1 & 2 & \underline{3} & 4 & 5 & 6 & 7  & 8 & 9  \\ \hline 
\end{tabular}

\end{center}

\bigskip
\end{example}

\bigskip

The next corollary follows directly from Theorems~\ref{ham-sim-thm},  \ref{middle-cor} and \ref{split-gr-prop}.
\begin{Cor}\label{numerical} $G\in$ \NGonecuptwo \ iff $G$ satisfies $(1)$ of Theorem~\ref{split-gr-prop} and $d_m=m-1$.
\end{Cor}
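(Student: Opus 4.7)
The plan is to obtain this corollary as a direct assembly of Theorem~\ref{ham-sim-thm}, Theorem~\ref{middle-cor}, and Theorem~\ref{split-gr-prop}; I would not introduce any new constructions.

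For the forward direction I would just inspect conditions $(2)$ and $(2')$ in Theorem~\ref{split-gr-prop}: each of them opens with ``$d_m = m-1$,'' and both characterizations share condition $(1)$. So being in \NGone\ or \NGtwo\ yields $(1)$ together with $d_m = m-1$ immediately.

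The reverse direction is where the interplay of the three cited results matters. Assuming $(1)$ and $d_m = m-1$, I would first invoke Theorem~\ref{ham-sim-thm} to conclude that $G$ is a split graph. Theorem~\ref{middle-cor} then forbids $d_{m-1} = d_m = d_{m+1}$, and since the sequence is non-increasing with $d_m = m-1$, the only ways this can fail are $d_{m-1} > m-1$ or $d_{m+1} < m-1$. The first case says $m$ is the smallest index with $d_i = m-1$, giving $(2')$ and hence $G\in$ \NGtwo\ via Theorem~\ref{split-gr-prop}; the second case says $m$ is the largest such index, giving $(2)$ and hence $G\in$ \NGone. In either situation $G$ lands in \NGonecuptwo.

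I don't expect any real obstacle: the only fiddly point is the boundary behavior when $m = 1$ or $m = n$, where one of $d_{m-1}$, $d_{m+1}$ does not exist and Theorem~\ref{middle-cor} cannot be applied literally. In each such case $m$ is vacuously the smallest or largest index with $d_i = m-1$, so one of $(2)$ or $(2')$ holds automatically and the argument closes without modification.
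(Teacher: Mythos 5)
Your proposal is correct and follows exactly the route the paper intends: the paper states that the corollary ``follows directly from Theorems~\ref{ham-sim-thm}, \ref{middle-cor} and \ref{split-gr-prop},'' and your write-up simply makes that assembly explicit, including the correct dichotomy from Theorem~\ref{middle-cor} and the harmless boundary cases $m=1$, $m=n$.
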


The following characterization of the class of balanced split graphs follows immediately from Theorem \ref{ham-sim-thm} and Corollary~\ref{numerical}.  Observe that Corollary~\ref{bal-recog-cor} does not make any reference to NG-graphs.

\begin{Cor}
 Let $G = (V,E)$ be a   graph with degree sequence $d_1 \ge d_2 \ge \cdots \ge d_n$ and  $m = max\{i:d_i \ge i-1\}.$   Then $G$ is a balanced split  graph   if and only if  it satisfies
 
\begin{itemize}
\item[(i)]
 $ \sum_{i=1}^m d_i = m(m-1) + \sum_{i = m+1}^n d_i$
 \item[(ii)] $d_m > m-1$.
 \end{itemize}
\label{bal-recog-cor}
\end{Cor}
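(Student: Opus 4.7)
The plan is to derive this as a direct consequence of the results already proved, so no new combinatorial argument is needed—only careful bookkeeping. The strategy is to factor the biconditional ``$G$ is a balanced split graph iff (i) and (ii)'' through the intermediate notion of being a split graph.

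First, I would observe that by Theorem~\ref{ham-sim-thm}, condition (i) alone is equivalent to $G$ being a split graph. So the corollary reduces to proving: \emph{among split graphs, $G$ is balanced iff $d_m>m-1$.} Equivalently, among split graphs, $G$ is unbalanced iff $d_m = m-1$. This is exactly what Corollary~\ref{numerical} gives, once combined with Theorem~\ref{unbal-thm}: a split graph $G$ is unbalanced iff $G \in$ \NGonecuptwo, and by Corollary~\ref{numerical} this happens iff (i) holds and $d_m = m-1$.

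The only small subtlety to spell out is that the two conditions ``$d_m = m-1$'' and ``$d_m > m-1$'' truly partition the split graphs. This follows from the definition $m = \max\{i : d_i \geq i-1\}$, which forces $d_m \geq m-1$ for every graph $G$; so given (i), exactly one of $d_m = m-1$ or $d_m > m-1$ must hold. Therefore the split graphs for which $d_m > m-1$ are exactly those that are not unbalanced, i.e.\ the balanced ones.

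Putting this together, the proof has essentially two lines. Forward: if $G$ is a balanced split graph, then (i) holds by Theorem~\ref{ham-sim-thm}, and (ii) holds because otherwise $d_m = m-1$ would place $G$ in \NGonecuptwo\ by Corollary~\ref{numerical}, making $G$ unbalanced by Theorem~\ref{unbal-thm}, a contradiction. Conversely: if (i) holds, then $G$ is a split graph by Theorem~\ref{ham-sim-thm}; if in addition $d_m > m-1$ then $G$ cannot belong to \NGonecuptwo\ (by Corollary~\ref{numerical}, the condition $d_m=m-1$ is necessary), so by Theorem~\ref{unbal-thm} $G$ is not unbalanced, hence balanced. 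There is no genuine obstacle here—the theorem is purely a repackaging of prior machinery, and the whole argument is little more than chaining the previous biconditionals together.
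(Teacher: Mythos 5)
Your proposal is correct and is essentially the same argument the paper intends: the paper states that the corollary ``follows immediately from Theorem~\ref{ham-sim-thm} and Corollary~\ref{numerical},'' and your chain (condition (i) $\Leftrightarrow$ split by Theorem~\ref{ham-sim-thm}; among split graphs, unbalanced $\Leftrightarrow$ \NGonecuptwo{} $\Leftrightarrow$ $d_m=m-1$ by Theorem~\ref{unbal-thm} and Corollary~\ref{numerical}; and $d_m \ge m-1$ always, so $d_m>m-1$ is the complementary case) is exactly that reasoning made explicit. No gaps.
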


In the next theorem, we characterize the class \NGthree \ by degree sequences.   In this instance,  $i$ is the middle index of five in which $d_i = m-1$.   An equivalent result appears  in \cite{MaPr94}.

 \begin{Thm} 
 Let $G = (V, E)$ be a  graph with degree sequence $d_1 \ge d_2 \ge \cdots \ge d_n$ and let $m = max\{i:d_i \ge i-1\}.$   Then   $G\in$(\NGthree)$_n$ if and only if  the following conditions hold:
 \begin{itemize}
  \item[(i)]  $  \sum_{i=1}^{m+2} d_i = (m+2)(m+1) - 10 + \sum_{i= m+3}^n d_i$
 
 \item[(ii)]  $d_i = m-1$ if and only if $m-2 \le i \le m+2$

 \end{itemize}

 \label{not-split-gr-prop}  
   \end{Thm}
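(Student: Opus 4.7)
My plan mirrors the two-step structure of Theorem~\ref{split-gr-prop}: for the forward direction, read off the degrees from the NG-3 structure; for the converse, use edge counting together with the degree constraint on the middle block. Specifically, let $G\in$ (\NGthree)$_n$ have ABC-partition $V(G) = A \cup B \cup C$, and write $b = |B|$. Theorem~\ref{NG-equal-thm} gives $\deg(v) = b+2$ for $v \in A$, $\deg(v) \geq b+4$ for $v \in B$, and $\deg(v) \leq b$ for $v \in C$; since $A$ induces a $5$-cycle and $B$ is completely joined to $A$, $\chi(G) = b+3$. Setting $m = b+3$, the sorted degree sequence satisfies $m = \max\{i : d_i \geq i-1\}$ with the five occurrences of $m-1$ landing exactly at positions $m-2, \ldots, m+2$, proving (ii); counting edges within and between the blocks yields
\[
\sum_{i=1}^{m+2} d_i - \sum_{i=m+3}^n d_i = 2|E(G[B \cup A])| - 2|E(G[C])| = b(b-1) + 10b + 10 = (m+2)(m+1) - 10,
\]
which is (i).

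For the converse, assume (i) and (ii). Partition $V(G)$ as $B' = \{v_1,\ldots,v_{m-3}\}$, $A' = \{v_{m-2},\ldots,v_{m+2}\}$, and $C' = \{v_{m+3},\ldots,v_n\}$. Combining the usual degree-sum identities for $B'\cup A'$ and for $C'$ with (i) rearranges to
\[
|E(G[B'])| + |E_{B'A'}| + |E(G[A'])| - |E(G[C'])| = \binom{m+2}{2} - 5.
\]
Introducing the nonnegative slack variables $\delta_1 = \binom{m-3}{2} - |E(G[B'])|$, $\delta_2 = 5(m-3) - |E_{B'A'}|$, $\delta_3 = 10 - |E(G[A'])|$, and $\epsilon = |E(G[C'])|$, this becomes $\delta_1 + \delta_2 + \delta_3 + \epsilon = 5$. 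The crux of the proof is to force all of the slack into $\delta_3$: since every vertex of $A'$ has degree $m-1$ by (ii), the $A'$-degree sum $5(m-1) = 2|E(G[A'])| + |E_{B'A'}| + |E_{A'C'}|$ simplifies to $|E_{A'C'}| = 2\delta_3 + \delta_2 - 10$. Nonnegativity of $|E_{A'C'}|$ forces $2\delta_3 + \delta_2 \geq 10$, but $\delta_2 + \delta_3 \leq 5$, so the only feasible point is $\delta_3 = 5$ and $\delta_1 = \delta_2 = \epsilon = |E_{A'C'}| = 0$.

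These equalities deliver the full NG-3 structure directly: $B'$ is a clique, $C'$ is a stable set, $A'$ is completely joined to $B'$ with no edges to $C'$, and $|E(G[A'])| = 5$. Each $v \in A'$ therefore has $(m-1) - (m-3) - 0 = 2$ neighbors within $A'$, so $G[A']$ is $2$-regular on five vertices, i.e.\ the $5$-cycle. A short coloring argument (color $B'$ with $m-3$ colors, $A'$ with $3$ new colors, and use those $3$ new colors for $C'$, whose vertices have no neighbors in $A'$) shows $\chi(G) = m$, so $A' \cup B' \cup C'$ coincides with the $ABC$-partition of $G$, and Theorem~\ref{NG-equal-thm} certifies $G\in$ (\NGthree)$_n$. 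I expect the main obstacle to be the defect argument: it hinges on the fortuitous numerology that the $A'$-degree identity pairs $(\delta_3, \delta_2)$ with coefficients $(2, 1)$ against a budget of exactly $5$, making $10$ simultaneously the required lower bound on $2\delta_3 + \delta_2$ and its attained maximum.
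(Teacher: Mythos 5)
Your proof is correct and takes essentially the same approach as the paper: the identical partition into $B'=\{v_1,\dots,v_{m-3}\}$, $A'=\{v_{m-2},\dots,v_{m+2}\}$, $C'=\{v_{m+3},\dots,v_n\}$, the same edge-counting identity, and the same recovery of the $5$-cycle from $2$-regularity. The only cosmetic difference is in the converse, where the paper forces equality via a single aggregate upper bound on the degree sum of $G[A'\cup B']$ (each $A'$-vertex contributes at most $m-1$, each $B'$-vertex at most $m+1$, and the bound coincides with $(m+2)(m+1)-10$), whereas you repackage exactly the same tightness argument as a slack-variable budget pinned down by the $A'$ degree identity.
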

  
  \begin{proof}
  Let $V(G) = \{v_1, v_2, \ldots, v_n\}$ where $deg(v_i) = d_i$.  First we suppose that $G$ satisfies the two conditions and show $G$ is an \ngthree graph.  Partition $V(G)$ into sets $B$, $A$, $C$ as follows and let  $X = A \cup B$:
  
   $B = \{v_1, v_2, \ldots, v_{m-3}\}$, 
   
   $A = \{v_{m-2}, v_{m-1}, v_m, v_{m+1}, v_{m+2}\}$,
   
    $C = \{v_{m+3}, v_{m+4}, \ldots, v_n\}.$ 
     
     \smallskip
     
     We will show that this is the ABC-partition of $G$.  Partition the edge set of $G$ as $E_1 \cup E_2 \cup E_3$ where  $E_1$ is the edge set of $G[X]$, $E_3$ is the edge set of $G[C]$ and $E_2$ is the set of edges in $G$ with one endpoint in $X$ and the other in $C$.   Summing vertex degrees in $X$ and in $C$ we get $$ \sum_{i=1}^{m+2} d_i = 2|E_1| + |E_2| \hbox {  and  } \sum_{i=m+3}^{n} d_i = 2|E_3| + |E_2|.$$  Using   condition (i) in the hypothesis, we get $$2|E_1| + |E_2| = (m+2)(m+1) - 10 + 2|E_3| + |E_2|,$$ or equivalently,
  \begin{equation}
    2|E_1|   = (m+2)(m+1) - 10 + 2|E_3|.   
  \label{eq-2}
  \end{equation}
  Since $|X| = m+2$, each vertex in $X$ has degree in $G[X]$ of at most $m+1$, and by   condition (ii), the five vertices in $A$ have degree $m-1$ in $G$, thus their degree is at most $m-1$ in $G[X]$.  So summing degrees of vertices in the induced graph $G[X]$ we get 
  \begin{eqnarray*}
  2|E_1| &=& \sum_{i=1}^{m+2} deg_{G[X]}(v_i)\\
  & \le& 5(m-1) + (m-3)(m+1)   = 5 (m+1) - 10 + (m-3)(m+1)\\
  &=&    (m+2)(m+1) - 10.
  \end{eqnarray*} 
  Combining this with equation~(\ref{eq-2}) we get $|E_3| = 0$ and conclude that $C$ is a stable set.   Now using $|E_3| = 0$ in (\ref{eq-2}) we get     $|E_1| = \binom{m+2}{2} - 5$, so $G[X]$ is a complete graph on $m+2$ vertices with five edges removed.
  
  However, $G[X]$ contains five vertices of degree $m-1$, namely those in $A$.  Thus each vertex in $A$ must be incident to exactly two of the removed edges, and the set of removed edges forms a 5-cycle.  The edges remaining in $G[A]$ also form a 5-cycle (since $\overline{C_5} = C_5$).  Thus $G[X]$ consists of a clique $G[B] \approx K_{m-3}$, a 5-cycle $G[A]$ and all edges between them.    We have already verified conditions (i) -- (iv) of Theorem~\ref{NG-equal-thm}.   Since $G[A]$ is a 5-cycle, any  largest clique in $G[X]$ consists of the vertices in $B$ together with two adjacent vertices of $A$, thus $\omega(G) \ge \omega(G[X]) = m-1$.   Each vertex in $A$ has degree $m-1$ in $G$ with $m-3$ neighbors in $G[B]$ and 2 neighbors in $G[A]$, thus   vertices  in $A$ are not adjacent to any vertices of $C$, proving (v).     This  also means that vertices in $C$ have degree at most $|B| = m-3$ and  thus cannot participate in a clique of size $m$, hence $\omega(G) = m-1$. 
  
   However, $\chi(G) \ge m$ since the 5-cycle $G[A]$ requires three colors and an additional $m-3$ colors are needed for the clique $G[B]$.    This coloring can be extended to all of $V(G)$ by assigning a color used in $A$ to all vertices of $C$.  Thus $\chi(G) = m$, the vertices in   $A$ have degree $m -1$, vertices in $B$ have degree greater than $m-1$ and vertices in $C$ have degree less than $m-1$, so the partition $V(G) = A \cup B \cup C$ is in fact an ABC-partition of $G$.  By Theorem~\ref{NG-equal-thm}, $G$ is an NG-graph and because $G[A]$ is a 5-cycle, it is an \ngthree  graph.

Conversely, suppose $G$ is an \ngthree graph, so its ABC-partition satisfies the conditions of Theorem~\ref{NG-charac} with $G[A]$ a 5-cycle.  By the definition of an ABC-partition, vertices in $B$ have degree greater than vertices in $A$ which in turn have degree greater than those in $C$.  Thus $B = \{v_1, v_2, \ldots, v_{|B|}\}$, $A = \{v_{|B|+1}, v_{|B|+2}, \ldots, v_{|B|+5}\}$,  and $C = \{v_{|B|+6},   \ldots, v_n\}$.  By the structure of \ngthree graphs, each vertex  in $A$ has degree $2+|B|$, thus $deg(v_{|B|+3}) \ge |B| + 2$ but $deg(v_{|B|+4}) < |B| + 3$ and hence $m = |B| + 3$.  Therefore, 

$B = \{v_1, v_2, \ldots, v_{m-3}\}$, 
   
   $A = \{v_{m-2}, v_{m-1}, v_m, v_{m+1}, v_{m+2}\}$,
   
    $C = \{v_{m+3}, v_{m+4}, \ldots, v_n\}$.

    Vertices in $A$ have degree $2 + |B| = m-1$ and these are the only vertices of degree $m-1$ by definition of the ABC-partition.  Thus the second condition in Theorem~\ref{not-split-gr-prop}  holds.
Vertices in $C$ can only have neighbors in $B$, thus $deg(c) \le |B| = m-3$ and we conclude that $d_i = m-1$ if and only if $m-2 \le i \le m+2$, which is the second condition of Proposition~\ref{not-split-gr-prop}.

     Finally, let $E_1$ be the edge set of $G[A\cup B]$ and $E_2$ be the set of edges in $G$ with one endpoint in $B$ and the other in $C$.  Since $G[A\cup B]$ is a clique with a 5-cycle removed we have,  $$ \sum_{i=1}^{m+2} d_i = 2|E_1| + |E_2|  = (m+2)(m+1) - 10 + |E_2|.$$
      Also,  because $C$ is a stable set and there are no edges between $A$ and $C$, $ \sum_{i=m+3}^{n} d_i =  |E_2|$.  Combining these gives the first condition   in Proposition~\ref{not-split-gr-prop}.
    \end{proof}
    
\begin{example}
Let $G_3$ be the \ngthree graph formed from $G_1$ in Example \ref{type1-ex} by expanding $A$ to be a 5-cycle.  Thus, $m=6$ which is the middle index of the 5 vertices of degree $d_m=5$.  
\medskip

\begin{center}

\begin{tabular}{|c|cccccccccccc|} \hline 
$i$ & 1 & 2 & 3 & 4 & 5 & \underline{6} & 7 & 8 & 9 & 10 & 11 & 12 \\ \hline
$d_i$ & 10 & 9 & 8 & {\bf 5} & {\bf 5} & \underline{{\bf 5}} & {\bf 5} & {\bf 5} & 2 & 2 & 1 & 1 \\ \hline
$i-1$ & 0 & 1 & 2 & 3 & 4 & \underline{5} & 6 & 7  & 8 & 9 & 10 & 11 \\ \hline 
\end{tabular}
\medskip 

\end{center}

\label{type3-ex}
\end{example}
    
We know that split graphs are not \ngthree graphs by Proposition \ref{type3-rem}.   Theorem~\ref{middle-cor} shows that the second condition of Theorem~\ref{not-split-gr-prop} is never  satisfied for a split graph.  The following example shows that it is possible for a split graph to satisfy the first condition of Theorem~\ref{not-split-gr-prop}.  

\begin{example}
Let $G$ be the split graph with $KS$ partition $K=\{v_1, v_2, v_3\}$,   $S=\{v_4, v_5, v_6\}$ and $E(G) = E(K) \cup \{v_1v_3,v_2v_4\}$.   The vertex degrees are:  $d_1=3, d_2=3, d_3=2, d_4=1, d_5=1, d_6=0$, so  $m=3$.  Notice that the first condition in Theorem \ref{not-split-gr-prop} is satisfied, but the second is not.  
\end{example}

Finally,  since sorting the degrees of the vertices of a graph can be done in linear time using stable sort,  Theorems~\ref{split-gr-prop} and \ref{not-split-gr-prop} imply the next corollary. 
    
    
    \begin{Cor}  Given a graph $G$,  determining if $G$ is an NG-1, NG-2 or an NG-3 graph can be done in linear time.   
    \end{Cor}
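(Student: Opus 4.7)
The plan is to reduce recognition of each class to linear-time checks performed on the sorted degree sequence of $G$, using Theorems~\ref{split-gr-prop} and \ref{not-split-gr-prop} as the underlying characterizations. First I would compute the multiset of degrees by scanning the adjacency representation of $G$, which takes time $O(|V(G)| + |E(G)|)$. Because every degree is an integer in the range $[0, |V(G)|-1]$, sorting $d_1 \geq d_2 \geq \cdots \geq d_n$ can be done in $O(|V(G)|)$ time by counting sort (this is the sense in which the preceding ``stable sort'' comment should be read). Given the sorted sequence, compute $m = \max\{i : d_i \geq i-1\}$ in a single left-to-right scan.

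With $m$ in hand, I would test each characterization. For NG-1 and NG-2, Theorem~\ref{split-gr-prop} reduces the decision to verifying the identity
\[
\sum_{i=1}^{m} d_i \;=\; m(m-1) + \sum_{i=m+1}^{n} d_i,
\]
which a single sweep over the sequence evaluates, together with comparing $d_{m-1}$, $d_m$, and $d_{m+1}$ to $m-1$ to determine whether $m$ is the largest index (giving NG-1), the smallest index (giving NG-2), or neither (giving neither class). For NG-3, Theorem~\ref{not-split-gr-prop} similarly reduces to evaluating the sum identity
\[
\sum_{i=1}^{m+2} d_i \;=\; (m+2)(m+1) - 10 + \sum_{i=m+3}^{n} d_i
\]
and then inspecting $d_{m-3}, d_{m-2}, \ldots, d_{m+3}$ to verify that the indices $i$ with $d_i = m-1$ are exactly $\{m-2, m-1, m, m+1, m+2\}$. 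Each of these tests is a constant number of comparisons on top of linear-time sum computations.

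The only step whose linearity is not immediate is the sort; but since the values being sorted are bounded integers, counting sort avoids the $\Theta(n\log n)$ barrier of comparison-based sorting and gives the required $O(|V(G)|)$ bound. After sorting, every other piece (computing $m$, evaluating the two partial sums, and inspecting the block of indices with value $m-1$) is either a single pass or a constant-time lookup. Summing these contributions, the overall algorithm runs in $O(|V(G)| + |E(G)|)$ time, which is linear in the size of $G$.
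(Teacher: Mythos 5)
Your proposal is correct and takes essentially the same route as the paper, which simply observes that the degrees can be sorted in linear time (since they are integers bounded by $|V(G)|-1$) and then invokes the degree-sequence characterizations of Theorems~\ref{split-gr-prop} and \ref{not-split-gr-prop}. Your version merely spells out the constant-time index checks and the single-pass sum evaluations that the paper leaves implicit.
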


\section{Bijections}
 \label{bijections-sec}

Throughout  this section we are considering  unlabeled graphs.    
 Definition~\ref{NG-123-def}  divides the set of NG-graphs into three categories:   \NGone,  \NGtwo,  and \NGthree, according to whether the set $A$ of the $ABC$-partition induces a clique, a stable set or a 5-cycle.  The first two categories overlap in the case where $|A| = 1$.  By removing this intersection as a separate class, we can partition the set of $NG$-graphs into four classes:  \NGonemtwo,  \NGtwomone, \NGonecaptwo \  and \NGthree. Recall that we also divide split graphs $(\cal S)$ into balanced $(\cal B)$ and unbalanced $(\cal U)$.  Let $({\cal T})_{\le n}$ be the set of split graphs on $n$ or fewer vertices and recall that 
    $(\mathcal{C})_n$  denotes the set of graphs with $n$ vertices in a class $\cal C$.  
 
 In this section we provide bijections between classes of NG-graphs and classes of split graphs.    In Section~\ref{counting}, we use these results to count the number of graphs in each class. 
 Table~\ref{bij-table} summarizes the bijection results in this section and Theorem~\ref{table-thm} records the locations of each of the proofs.  The proofs are organized by the type of bijection used.
   
\begin{Thm}\label{table-thm}
There is a bijection between any two classes of graphs that appear in the same row of Table~\ref{bij-table}.
\end{Thm}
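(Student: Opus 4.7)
The plan is to prove Theorem~\ref{table-thm} by exhibiting an explicit bijection for each row of Table~\ref{bij-table}, using the structural description of NG-graphs in Theorems~\ref{NG-charac} and~\ref{num-part-thm} together with the identification $\mathcal{U} = $ \NGonecuptwo \ from Remark~\ref{Un-rem}. For rows pairing an NG-1-type class with an NG-2-type class I would use complementation, and for rows pairing an NG-graph subclass with a split-graph subclass I would use an ``$A$-removal'' construction.

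The complementation half is the easier one. A direct check from Theorem~\ref{NG-charac} shows that if $G$ has ABC-partition $V(G) = A \cup B \cup C$, then $\overline{G}$ has an ABC-partition on the same underlying triple of sets, with $\overline{G}[A]$ the complement of $G[A]$ and with the roles of $B$ and $C$ interchanged; this uses the NG-equality $\chi(G) + \chi(\overline{G}) = n+1$ to verify that degrees in $\overline{G}$ compare correctly to $\chi(\overline{G})-1$. Since the complement of a clique is a stable set and $\overline{C_5} = C_5$, the map $G \mapsto \overline{G}$ is a size-preserving involution that interchanges \NGone \ and \NGtwo \ and sends \NGthree \ to itself, yielding bijections between the $n$-vertex graphs in \NGonemtwo \ and those in \NGtwomone, together with self-bijections on the $n$-vertex graphs in \NGonecaptwo \ and in \NGthree.

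For the $A$-removal construction, given $G \in$ \NGone \ with ABC-partition $A \cup B \cup C$ and $|A| = a$, the subgraph $G[B \cup C]$ is a split graph on $n-a$ vertices with $KS$-partition $K = B$, $S = C$; Definition~\ref{ABC-def} forces every $v \in B$ to have a neighbor in $C$ (otherwise $v$ would have the same degree as a vertex of $A$), so the image lies in a restricted subclass of split graphs. Conversely, an integer $a \ge 1$ together with such a constrained split graph $H$ on $n-a$ vertices recovers a unique \NGone \ graph on $n$ vertices by adjoining a clique of size $a$ fully joined to $K(H)$. Dual constructions handle \NGtwo \ (adjoining a stable set in place of a clique) and \NGthree \ (adjoining a 5-cycle), giving the bijections between those subclasses and suitably parameterized families of split graphs.

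The hardest row to establish is the main bijection $\mathcal{U}_n \leftrightarrow (\mathcal{S})_{\le n-1}$ stated in the abstract. By Remark~\ref{Un-rem}, each $G \in \mathcal{U}_n$ lies in \NGone \ or \NGtwo, so $A$-removal produces a split graph on $n - |A|$ vertices with $|A| \ge 1$, hence an element of $(\mathcal{S})_{\le n-1}$. Injectivity is the main obstacle: graphs in the overlap \NGonecaptwo \ (where $|A| = 1$) admit both an NG-1 and an NG-2 decomposition, and preimages for different values of $a$ must be disjoint. I would resolve this by fixing a canonical choice of decomposition (for instance, always apply the NG-1 removal when $G \in$ \NGone) and then show that every $H \in (\mathcal{S})_{\le n-1}$ has a unique preimage under this convention, using the degree-sequence characterizations in Theorems~\ref{split-gr-prop}, \ref{not-split-gr-prop}, and Corollary~\ref{bal-recog-cor} to track how $\chi$ and $\omega$ change between $H$ and its constructed preimage.
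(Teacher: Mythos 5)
Your complementation argument and the basic ``remove $A$, reattach a clique joined to $K$'' idea are both sound and both appear in the paper (complementation handles the \NGone\ $\leftrightarrow$ \NGtwo\ pairings, and $A$-removal is exactly Theorem~\ref{glor-thm}(1)). However, your plan for the central bijection between $({\cal U})_n$ and $({\cal T})_{\leq n-1}$ has a genuine gap, and you have located the injectivity problem in the wrong place. The failure does not come from the overlap \NGonecaptwo\ (a graph with $|A_G|=1$ yields the same graph $G-A_G$ under either reading), but from the interaction of \NGonemtwo\ with \NGtwomone: if $H$ is any split graph on $m\le n-2$ vertices, attaching a clique of size $n-m$ to the $K$ of an $S$-max partition of $H$ produces a graph in \NGonemtwo$_n$, while attaching a stable set of size $n-m$ completely joined to the $K$ of a $K$-max partition produces a non-isomorphic graph in \NGtwomone$_n$ (its $A$ induces a stable set rather than a clique), and both have $A$-removal isomorphic to $H$. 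So the map is $2$-to-$1$ over $({\cal T})_{\le n-2}$, and no canonical choice of decomposition for a single $G$ repairs this. The map is also not surjective: by Lemma~\ref{glorious-lem}, $|A_G|=1$ forces $G-A_G$ to be both $K$-max and $S$-max, hence balanced, so unbalanced split graphs on exactly $n-1$ vertices have no preimage at all. The paper avoids both problems by using \emph{single-vertex} removal (delete one $w\in A_G$) for row (a), which lands precisely in $({\cal S})_{n-1}$, and by restricting full $A$-removal to \NGonemtwo\ alone, where it does biject onto $({\cal T})_{\le n-2}$; the $({\cal U})_{n-1}$ entry of row (c) is then reached by composition, not by a direct $A$-removal on all of ${\cal U}$.

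A second gap concerns row (d): for \NGthree\ you propose to adjoin or delete the $5$-cycle $A$, but $G\mapsto G[B\cup C]$ is \emph{not} injective on (\NGthree)$_n$ --- the paper's Example~\ref{non-inj-ex} exhibits two non-isomorphic NG-$3$ graphs with isomorphic $A$-removals. The reason is that in an NG-$3$ graph a vertex of $B$ may have no neighbor in $C$, so the partition $K=B$, $S=C$ of $G[B\cup C]$ need not be $S$-max and cannot be recovered from the image graph. The paper instead deletes $A\cup B'$ (where $B'$ is the set of $B$-vertices with no neighbor in $C$) to reach $({\cal T})_{\le n-5}$, and reaches $({\cal U})_{n-4}$ by retaining one edge of the $5$-cycle rather than deleting all of $A$. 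As written, your proposal therefore does not establish rows (c) and (d), and its version of row (a) lands in a parameterized family of pairs rather than in $({\cal S})_{n-1}$ itself.
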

\begin{proof}
The bijections between the classes of graphs in row (a) appear in Theorem~\ref{ngone-to-split}.  The bijection between (iv) and (iv') in the proof    of Theorem~\ref{ngone-to-split} gives the bijection between the classes in row (b).  Complementation provides a bijection between \NGonemtwo$_n$ and 
\NGtwomone$_n$ in row (c).  A bijection beween \NGonemtwo$_n$ and $({\cal U})_{n-1}$ in row (c) is obtained by combining Remark~\ref{Un-rem} with the bijection given in  the proof of Theorem~\ref{ngone-to-split} between
the union of classes (i), (ii) and (iii) and the union of classes (i'), (ii') and (iii').  For  row (d), Theorem~\ref{un-ngthree-thm}
 provides a bijection between (\NGthree)$_n$ and $({\cal U})_{n-4}$.    The remaining bijections appear in Theorem~\ref{glor-thm}.
\end{proof}

    \begin{table}[h]
 \begin{center}
 \begin{tabular}{|l|l|l|l|l|} \hline
 (a)& (\NGone)$_n$  & (\NGtwo)$_n$ & $({\cal S})_{n-1}$ &  \\ \hline
  (b)&  \NGonecaptwo$_n$& $({\cal B})_{n-1}$ &  &  \\ \hline
  (c)& \NGonemtwo$_n$ & \NGtwomone$_n$ &$({\cal U})_{n-1}$  & $({\cal T})_{\leq n-2}$ \\ \hline
  (d)& (\NGthree)$_n$  &$({\cal U})_{n-4}$    & $({\cal T})_{\leq n-5}$ &  \\ \hline
 \end{tabular}
 \end{center}
 
 \caption{There are bijections between all classes in the same row.}
 \label{bij-table}
 \end{table}


 
 
 

 We begin with a lemma that identifies the type of  $KS$-partition that results from removing $A_G$ from an NG-graph $G$.

\begin{lemma}\label{glorious-lem} Let $G$ be an NG-graph and $V(G) = A \cup B \cup C $ be its   ABC-partition.  Then $G-A$ is a split graph and has a  $KS$-partition with $K=B $ and $S=C$.  Moreover, 
\begin{enumerate}
\item if $G \in $ \NGone   \ then $KS$  is $S$-max, and 
\item if $G \in $ \NGtwo \  then  $KS$  is $K$-max.
\end{enumerate}
\end{lemma}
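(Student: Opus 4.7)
The plan is to verify the $KS$-partition claim directly from Theorem~\ref{NG-charac}, and then determine whether this partition is $S$-max or $K$-max in each case via a short degree calculation driven by the ABC-partition.

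For the $KS$-partition claim, parts (ii) and (iii) of Theorem~\ref{NG-charac} state that $G[B]$ is a clique and $G[C]$ is a stable set. Since $V(G-A) = B \cup C$, the assignment $K = B$, $S = C$ is a valid $KS$-partition of $G-A$, so $G-A$ is a split graph.

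To identify the type of the partition I would first pin down $\chi(G)$ by computing the degree of a vertex $a \in A$, using the equality $\deg(a) = \chi(G) - 1$ from Definition~\ref{ABC-def} together with clauses (iv) and (v) of Theorem~\ref{NG-charac}. In case (1), $G[A]$ is a clique, so $\deg(a) = (|A|-1) + |B|$, giving $\chi(G) = |A| + |B|$; in case (2), $G[A]$ is a stable set, so $\deg(a) = |B|$, giving $\chi(G) = |B| + 1$. In case (1), the condition $\deg(b) > \chi(G) - 1$ for each $b \in B$, combined with (iv), forces $b$ to have at least one neighbor in $C$; since at most one vertex of the clique $B$ can belong to any stable set and no such vertex can be appended to $C$, we conclude $\alpha(G-A) = |C|$, so $KS$ is $S$-max. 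In case (2), the condition $\deg(c) < \chi(G) - 1 = |B|$ for each $c \in C$ forces $c$ to be non-adjacent to some vertex of $B$; since at most one vertex of the stable set $C$ can belong to any clique and no such vertex is adjacent to all of $B$, we conclude $\omega(G-A) = |B|$, so $KS$ is $K$-max.

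The argument is essentially a bookkeeping exercise with no deep step; the only thing to be careful about is keeping straight which clause of the ABC-definition and which clause of Theorem~\ref{NG-charac} feed into each degree inequality, and recalling that in a split graph at most one vertex of $K$ lies in any stable set and at most one vertex of $S$ lies in any clique.
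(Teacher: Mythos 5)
Your proposal is correct and follows essentially the same route as the paper: establish the $KS$-partition from Theorem~\ref{NG-charac}, then use the degree gap between $A$ and $B$ (resp.\ $A$ and $C$) to show every vertex of $B$ has a neighbor in $C$ (resp.\ every vertex of $C$ has a non-neighbor in $B$), which pins down $\alpha(G-A)=|C|$ in case (1) and $\omega(G-A)=|B|$ in case (2). The only cosmetic difference is that you compute $\chi(G)$ explicitly where the paper just compares $\deg(b)$ to $\deg(a)$ directly.
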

\begin{proof} It is immediate that $G-A$ is a split graph and that $K=B$, $S=C$ constitutes a $KS$-partition of $G-A$.  In the ABC-partition of an \ngone graph $G$, every  vertex in $B$ has degree greater than every vertex in $A$ so each vertex in $B$ has a neighbor in $C$.  Thus in  our $KS$ partition of $G-A$, no vertex in $ K$ can  be moved to $ S$.  Hence, $\alpha(G-A)= |S| = |C|$. Similarly, in the ABC-partition of an \ngtwo graph $G$, every vertex in $C$ has degree less than any vertex in $A$, so every vertex of $C$ has a non-neighbor in $B$.  Hence in $G-A$, no vertex in $ S$ can be moved to $K$.  Thus, $\omega(G-A)= |K| = |B|$. 
\end{proof}

For an NG-graph $G$, let $C'_G$ be the set of vertices in $C_G$ whose neighbor set  is all of $B_G$.    In the proof of Theorem~\ref{ngone-to-split} we obtain our bijections by removing a single vertex from $A_G$.

\begin{Thm} For $n\geq 1$, there  are bijections between the following three classes: (\NGone)$_n$, (\NGtwo)$_n$ and
 $({\cal S})_{n-1}$. 
\label{ngone-to-split}
\end{Thm}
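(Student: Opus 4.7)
The plan is to construct explicit bijections $\Phi_1$ from (\NGone)$_n$ to $({\cal S})_{n-1}$ and $\Phi_2$ from (\NGtwo)$_n$ to $({\cal S})_{n-1}$, both defined by removing a single vertex from $A_G$. For $G \in$ (\NGone)$_n$, set $\Phi_1(G) = G - a$ for any $a \in A_G$. Any two vertices of $A_G$ are closed twins in $G$ (each is adjacent to every other vertex of the clique $A_G \cup B_G$ and non-adjacent to every vertex of $C_G$), so swapping them is an automorphism of $G$; hence $\Phi_1$ is well defined on unlabeled graphs. The image lies in $({\cal S})_{n-1}$ because in $G - a$ the set $(A_G \setminus \{a\}) \cup B_G$ is a clique and $C_G$ is a stable set. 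The map $\Phi_2$ is defined analogously; the resulting $KS$-partition of $G - a$ has $K = B_G$ and $S = (A_G \setminus \{a\}) \cup C_G$.

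For the inverse $\Psi_1 : ({\cal S})_{n-1} \to $ (\NGone)$_n$, take $H \in ({\cal S})_{n-1}$ with its unique $K$-max partition $(K, S)$ (uniqueness by Proposition~\ref{unique-bal-prop} when $H$ is balanced and Corollary~\ref{two-unbal-prop} when $H$ is unbalanced), and let $\Psi_1(H) = H + a$ where $a$ is a new vertex adjacent to every vertex of $K$. A direct degree computation shows $\chi(G) = \omega(G) = |K| + 1$, so $\chi(G) - 1 = |K|$, and the ABC-partition of $G$ is
\[ A_G = \{a\} \cup A_K, \qquad B_G = K \setminus A_K, \qquad C_G = S, \]
where $A_K = \{v \in K : N_H(v) \cap S = \emptyset\}$. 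Since $A_K \subseteq K$ and $a$ is adjacent to all of $K$, the set $A_G$ is a clique of $G$, and conditions (ii)--(v) of Theorem~\ref{NG-equal-thm} are inherited from the split structure of $H$; hence $\Psi_1(H) \in$ (\NGone)$_n$. The map $\Psi_2$ is parallel but uses the unique $S$-max partition; the resulting $A_G = \{a\} \cup A_S$, with $A_S = \{v \in S : K \subseteq N_H(v)\}$ contained in the stable set $S$, is itself a stable set, yielding an NG-2 graph.

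To verify that $\Phi_i$ and $\Psi_i$ are mutually inverse, note that $\Phi_1 \circ \Psi_1$ is the identity because the vertices of $A_{\Psi_1(H)} = \{a\} \cup A_K$ are closed twins in $\Psi_1(H)$, so removing any one of them returns an unlabeled graph isomorphic to $H$. For $\Psi_1 \circ \Phi_1$, the delicate case is $|A_G| = 1$: here each $c \in C_G$ satisfies $\deg_G(c) = |N(c) \cap B_G| \leq |B_G| - 1 < |B_G| = \chi(G) - 1$, so $C'_G = \emptyset$ and $H = G - a$ has $(B_G, C_G)$ as both its $K$-max and $S$-max partition. Thus $H$ is balanced, and $\Psi_1$ re-attaches $a$ adjacent to $B_G$ to recover $G$. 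When $|A_G| \geq 2$, $H$ is unbalanced with $K$-max partition $((A_G \setminus \{a\}) \cup B_G,\; C_G)$, and $\Psi_1$ re-attaches $a$ adjacent to this clique, again recovering $G$. The argument for $\Phi_2$ and $\Psi_2$ is parallel, and composing $\Phi_2^{-1} \circ \Phi_1$ gives the desired bijection from (\NGone)$_n$ to (\NGtwo)$_n$.

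The main obstacle is the $|A_G| = 1$ case of $\Psi_1 \circ \Phi_1$: without the observation that $|A_G| = 1$ forces $C'_G = \emptyset$, one would worry that $H = G - a$ could be unbalanced with a $K$-max partition strictly larger than $B_G$, in which case re-adding $a$ adjacent to that larger clique would produce a graph with $\chi > \chi(G)$ rather than $G$ itself. The degree inequality above rules this out and ensures that the canonical $K$-max partition used by $\Psi_1$ is always compatible with the removal step in $\Phi_1$.
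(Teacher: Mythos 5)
Your proof is correct, and the underlying bijection is the same as the paper's: delete one vertex of $A_G$ in the forward direction, and re-attach a vertex dominating the clique side in the reverse direction. The difference is in how the inverse is specified and verified. The paper partitions $({\cal S})_{n-1}$ into four subclasses --- \NGonemtwo, \NGonecaptwo, \NGtwomone, and the balanced split graphs --- and defines $g$ separately on each, reading the attachment set off the ABC-partition of $H$ (or its unique $KS$-partition in the balanced case); that casework is not wasted, since it simultaneously establishes the finer subclass bijections recorded in rows (b) and (c) of Table~\ref{bij-table}, which the paper uses later. You instead give a single uniform prescription: attach the new vertex to the essentially unique $K$-max (resp.\ $S$-max) clique of $H$, with uniqueness supplied by Proposition~\ref{unique-bal-prop} and Corollary~\ref{two-unbal-prop}. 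The two prescriptions agree --- by Theorem~\ref{num-part-thm} the $K$-max clique of an unbalanced $H$ is $A_H\cup B_H$ or $B_H\cup\{x\}$ according as $H$ is NG-1 or NG-2 --- so what you gain is a case-free verification, plus two details you make explicit that the paper leaves implicit: the closed-twin (resp.\ open-twin) argument showing $f$ is well defined on unlabeled graphs regardless of which vertex of $A_G$ is removed, and the check that $|A_G|=1$ forces $G-a$ to be balanced so that the canonical partition used by the inverse matches. What you give up is the subclass correspondences. Your derivation of \NGone$_n\leftrightarrow$\NGtwo$_n$ by composing $\Phi_2^{-1}\circ\Phi_1$ is valid, though the paper's one-line complementation argument is more direct.
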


\begin{proof}   Taking graph complements provides a bijection between the classes (\NGone)$_n$ and  (\NGtwo)$_n$, so it remains to show there exists a bijection between (\NGone)$_n$ and $({\cal S})_{n-1}$. 

 For $G \in $ (\NGone)$_n$, choose any $w \in A_G$ and let $f(G) = G-w$.  We will show that $f$ is our desired bijection.  Indeed, we will do more.   Partition the set of graphs in (\NGone)$_n$ into four classes:  (i) those $G$ with $|A_G| \ge 3$, (ii) those $G$ with $|A_G| = 2$ and $C'_G = \emptyset$, (iii) those $G$ with $|A_G| = 2$ and $C'_G \neq \emptyset$ and (iv) those $G$ with $|A_G|=1$.  
Likewise, partition the set of graphs in $({\cal S})_{n-1}$ into four classes:  (i') \NGonemtwo$_{n-1}$, (ii')  \NGonecaptwo$_{n-1}$, (iii') \NGtwomone$_{n-1}$ and (iv') $({\cal B})_{n-1}$.  We will show that $f$ is a bijection between each of (i), (ii), (iii) and (iv) and its corresponding class (i'), (ii'),  (iii'), (iv').  
 
 Let $G \in  $ \NGone$_n$  and let $A = A_G$, $B = B_G$,  $C = C_G$ and $C' = C'_G$, so $G[A]$ is a clique.  Then $\chi(G) = |A| + |B|$ so each $a \in A$ has $deg_G(a) = |A| + |B| - 1$, each $b \in B$ has $deg_G(b) > |A| + |B| - 1$, and each $c \in C$ has $deg_G(c) < |A| + |B| - 1$.  The graph $G-w$ has $\chi(G-w) = |A| + |B| -1$.   Each $a \in A-w$ has $deg_{G-w}(a) =  deg_G(a) - 1= |A| + |B| - 2$,  thus $a \in A_{G-w}$.  Each $b \in B$ has $deg_{G-w}(b)  = deg_B(b) -1 >  |A| + |B| - 2$ thus $b \in B_{G-w}$.  However, each $c \in C$ has $deg_{G-w}(c) = deg_G (c)$,
so some vertices in $C$ may be part of $A_{G-w}$.  We consider the cases mentioned above.
\medskip

\noindent {\bf Case (i):}  $|A_G| \ge 3$.  In this case, each $c \in C$ has   $deg_{G-w}(c) = deg_G(c) \le |B| < |A| + |B| -2 = \chi(G-w) - 1$, thus $c \in C_{G-w}$.  The ABC-partition of $G-w$ satisfies the five conditions of Theorem~\ref{NG-charac}, $A_{G-w}$ induces a clique, and $|A_{G-w}| \ge 2$, thus $G-w \in $ \NGonemtwo$_{n-1}$.
\smallskip

\noindent {\bf Cases (ii) and (iii):}  $|A_G| = 2$.   In this case, $A \cup C' -w$ forms a stable set and each vertex in $A \cup C' -w$ has neighbor set $B$.    Then for any $c' \in C'$ we have $deg_{G-w}(c') = |B| = (|A| -1) + |B| -1 = \chi(G-w) -1$ so $A_{G-w} = A \cup C' - w$, $B_{G-w} = B_G$, $C_{G-w} = C - C'$.  The ABC-partition of $G-w$ satisfies the five conditions of Theorem~\ref{NG-charac}, $A_{G-w}$ induces a stable set, thus $G-w \in $ (\NGtwo)$_{n-1}$.  If $C'_G = \emptyset$ (case (ii)) then $|A_{G-w}| =1$ and $G-w \in  $
\NGonecaptwo$_{n-1}$.  If $C'_G \neq \emptyset$ (case (iii)) then $|A_{G-w}| \ge 2$ and $G-w \in$ \NGtwomone$_{n-1}$.
\smallskip

\noindent {\bf Case (iv):} $|A|=1$. By Lemma~\ref{glorious-lem}, $G-A$ is a split graph on $n-1$ vertices with a $KS$-partition that is both $K$-max and $S$-max.  By Definition~\ref{bal-unbal-def}, the graph $G-A$ is a balanced split graph, thus $G-A \in ({\cal B})_{n-1}$.
\smallskip

Next, we show the map $f$ is onto by defining its inverse, $g$.  Take any $H \in ({\cal S})_{n-1}$.
\medskip

\noindent {\bf Case (i'):}   $H \in $ \NGonemtwo$_{n-1}$.  Thus $|A_H| \ge 2$ and $A_H$ induces a clique.  Add a vertex $w$ to $H$ that is adjacent to every vertex in $A_H \cup B_H$ to get $H+w$ and define $g(H) = H+w$.  

  Then $\chi(H) = |A_H| + |B_H|$ so each $a \in A_H$ has $deg_H(a) = |A_H| + |B_H| -1$,  each $b \in B_H$ has $deg_H(b) > |A_H| + |B_H| -1$, and each $c \in C_H$ has $deg_H(c) < |A_H| + |B_H| -1$.  
  The graph $H+w$ has $\chi(H+w) = |A_H| + |B_H|+ 1$.  Each   $a \in A_H \cup \{w\}$ has $deg_{H+w}(a) = |A_H| + |B_H|  $ thus $a \in A_{H+w}$.  Each   $b \in B_H  $ has $deg_{H+w}(b) = deg_H(b) + 1   $ thus $b \in B_{H+w}$.  Each   $c \in C_H  $ has $deg_{H+w}(c) = deg_H(c) $ thus $c \in C_{H+w}$.   The ABC-partition of $H+w$ satisfies the five conditions of Theorem~\ref{NG-charac},   $A_{H+w}$ induces a clique and $|A_{H+w}| \ge 3$, thus $H+w \in$   \NGonemtwo$_{n-1}$  (case i).
  \smallskip
  
\noindent {\bf   Cases (ii') and (iii'):}  $H \in $ (\NGtwo)$_{n-1}$.  In this case, $A_H$ is a stable set.  
 Let $x$ be a vertex in $A_H$. Now $\chi(H) =   |B_H| + 1$.   Each $a \in A_H$ has $deg_H(a) = \chi(H) -1 =    |B_H|  $,  each $b \in B_H$ has $deg_H(b) > \chi(H) -1 =   |B_H|  $, and  each $c \in C_H$ has $deg_H(c) <  \chi(H) -1 =  |B_H|$.   Add a vertex $w$ to $H$ that is adjacent to $x$ and all of $B_H$ to get $H+w$ and let $g(H) = H+w$.  Then $\chi(H+w) = \chi(H) + 1 = |B_H| + 2$.  The vertices $x$ and $w$ have $deg_{H+w}(x) = deg_{H+w}(w) = |B_H| + 1$, thus $x,w \in  A_{H+w}$.  Each $b \in B_H$ has $deg_{H+w}(b) =  deg_H(b) + 1 > |B_H| + 1$, thus $b \in  B_{H+w}$.  Each $c \in A_H \cup C_H - \{x\}$ has $deg_{H+w}(c) \le |B_H|  < \chi(H+w) - 1$, thus $c \in  C_{H+w}$.  The ABC-partition of $H+w$ satisfies the five conditions of Theorem~\ref {NG-charac},   $A_{H+w}$ induces a clique and $|A_{H+w}|  =2$, thus $H+w \in$  (\NGone)$_n$ with $|A|=2$ (cases ii and iii). 
  
 Indeed, if $H \in $  (\NGone \ $\cap$ \NGtwo) (case ii') then $A_H = \{x\}$ and no vertices in $H+w$ have degree $|B_H|$.  But $|B_H| = |B_{H+w}|$, so no vertices in $H+w$ have degree $ |B_{H+w}|$ and $C'_{H+w} = \emptyset$ (case ii).  Finally, if $H \in $ (\NGtwo \ $-$ \NGone) (case  iii') then $|A_H| \ge 2$ and there exists at least one vertex $y \in A_H - \{x\}$.  Then $deg_H(y) = deg_{H+w}(y) = |B_H| = |B_{H+w}| $  so $y \in  C'_{H+w}$ and $C'_{H+w} \neq \emptyset$ (case iii).
\smallskip
   
\noindent {\bf Case (iv'): } $H\in ({\cal B})_{n-1}$. In this case,  $H$ has a unique $KS$-partition by Proposition~\ref{unique-bal-prop} such that $|K|=\omega(H)=\chi(H)$ and $|S|=\alpha(H)$.  Add a vertex $w$ to $H$ that is adjacent to each vertex in $K$ and let $g(H) = H+w$.
Then $\chi(H+w) = |K| + 1$ and $deg(w) = |K| = \chi(H+w) -1$.  Each $x \in K$ has a  neighbor in $S$ since $|S| = \alpha(H)$, thus $deg_{H+w}(x) > |K| = \chi(H+w) -1$.  
Similarly, each $y \in S$ has a non-neighbor in $K$ since $|K| = \omega(H)$, thus $deg_{H+w}(y) <|K| = \chi(H+w) -1$.   
The ABC-partition of $H+w$ is $A = \{w\}$, $B = K$, $C = S$ and this satisfies the five conditions of Theorem~\ref {NG-charac},  with  $|A_{H+w}|  =1$, thus $H+w \in $    (\NGone)$_n$ with $|A|=1$ (case iv).    
\smallskip

It is not hard to see that the function $g$ is  the inverse  to $f$, thus we have described a bijection between (\NGone)$_n$ and $({\cal S})_{n-1}$.  
\end{proof}

  \begin{Thm}
  There is a bijection between (\NGthree)$_n$ and $({\cal U})_{n-4}$.
  \label{un-ngthree-thm}
   \end{Thm}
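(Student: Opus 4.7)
The plan is to define inverse maps $f\colon(\NGthree)_n\to({\cal U})_{n-4}$ and $g\colon({\cal U})_{n-4}\to(\NGthree)_n$ by collapsing and blowing up the 5-cycle $G[A]$. Given $G\in(\NGthree)_n$ with ABC-partition $V(G)=A\cup B\cup C$ where $A=\{a_1,\ldots,a_5\}$ induces a 5-cycle, set $f(G)=G-\{a_2,a_3,a_4,a_5\}$. Vertex-transitivity of $C_5$ together with Theorem~\ref{NG-charac} (the five $a_i$'s have identical adjacency to $B\cup C$) makes the isomorphism class of $f(G)$ independent of which four vertices are removed. Writing $H=f(G)$, the set $\{a_1\}\cup B$ is a clique and $C$ is a stable set, so $H$ is a split graph on $n-4$ vertices. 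A direct count gives $\omega(H)=|B|+1$, and using $a_1\not\sim C$ the set $\{a_1\}\cup C$ is a stable set, so $\alpha(H)=|C|+1$. Hence the $KS$-partition $K=\{a_1\}\cup B$, $S=C$ is $K$-max with $|S|=\alpha(H)-1$, and Theorem~\ref{split-thm} shows $H\in({\cal U})_{n-4}$.

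For the inverse map, let $H\in({\cal U})_{n-4}$. By Remark~\ref{Un-rem}, $H\in\NGonecuptwo$, so $A_H\neq\emptyset$ and $H[A_H]$ is a clique or stable set. The key structural observation is that all vertices of $A_H$ are mutual twins in $H$: Theorem~\ref{NG-charac} implies $N_H(v)=(A_H\cup B_H)\setminus\{v\}$ for each $v\in A_H$ when $H\in$ \NGone, and $N_H(v)=B_H$ for each $v\in A_H$ when $H\in$ \NGtwo. Define $g(H)$ by choosing any $a\in A_H$, deleting it, and inserting five new vertices $\{a_1,\ldots,a_5\}$ that induce a 5-cycle and such that each $a_i$ is adjacent to every vertex of $N_H(a)$ and to nothing else. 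Twinship ensures the isomorphism class of $g(H)$ does not depend on the choice of $a$.

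I would then verify $g(H)\in(\NGthree)_n$ by reading off its ABC-partition. The induced join of the new 5-cycle with the clique $N_H(a)$ forces $\chi(g(H))=|N_H(a)|+3$, and each $a_i$ has degree $|N_H(a)|+2=\chi(g(H))-1$, so $A_{g(H)}=\{a_1,\ldots,a_5\}$. In the \NGone\ case $B_{g(H)}=(A_H\setminus\{a\})\cup B_H$ and $C_{g(H)}=C_H$; in the \NGtwo\ case $B_{g(H)}=B_H$ and $C_{g(H)}=C_H\cup(A_H\setminus\{a\})$, where the added vertices land in $C'_{g(H)}$. Theorem~\ref{NG-charac} then confirms $g(H)$ is an NG-3 graph on $n$ vertices. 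The check that $f$ and $g$ are mutually inverse is then routine: collapsing the five inserted vertices recovers $a$ with neighborhood $N_H(a)$, and since the kept vertex $a_1$ in $f(G)$ lies in $A_{f(G)}$, choosing it for the blow-up in $g$ recovers $G$ exactly.

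The main technical obstacle is proving $g$ is well-defined independent of the choice of $a\in A_H$. This reduces to the twin property of $A_H$ in an unbalanced split graph, which must be extracted carefully from Theorem~\ref{NG-charac} in both the \NGone\ and \NGtwo\ cases. Once twinship is established, the remaining verifications are straightforward degree computations analogous to the proof of Theorem~\ref{not-split-gr-prop}.
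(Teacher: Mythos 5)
Your proof is correct, but it takes a genuinely different route from the paper's. The paper does not construct the bijection with $({\cal U})_{n-4}$ directly: it keeps an \emph{edge} $\{a_1,a_2\}$ of the 5-cycle, producing a map from (\NGthree)$_n$ onto \NGonemtwo$_{n-3}$, and then invokes the previously established chain of bijections in row (c) of Table~\ref{bij-table} (via Theorem~\ref{ngone-to-split} and Remark~\ref{Un-rem}) to reach $({\cal U})_{n-4}$. You instead collapse the 5-cycle all the way to a single vertex in one step and verify everything from scratch: that the image is an unbalanced split graph (via $\omega+\alpha=|V|+1$ and Theorem~\ref{split-thm}), and that the blow-up inverse is well defined because the vertices of $A_H$ in an unbalanced split graph are mutual twins --- a fact you correctly extract from Theorem~\ref{NG-charac} in both the \NGone\ and \NGtwo\ cases. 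Your composite map is essentially what one gets by composing the paper's two maps, but your presentation is more self-contained: it avoids the intermediate class \NGonemtwo$_{n-3}$ and the machinery of Theorem~\ref{ngone-to-split}, at the cost of a two-case analysis for the inverse and an explicit well-definedness argument. One small point worth making explicit if you write this up: keeping exactly one vertex of $A_G$ (rather than deleting all five) is what preserves injectivity --- compare Example~\ref{non-inj-ex}, where deleting all of $A_G$ fails --- because the surviving twin records which vertices of $B_G$ would otherwise ambiguously migrate into the $A$-set of the image.
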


\begin{proof}  We will demonstrate a bijection between (\NGthree)$_n$ and \NGonemtwo$_{n-3}$.
  Then the desired bijection follows  because we have already established bijections between the first three classes in row (c) of Table~\ref{bij-table}.   
Let $G\in$ (\NGthree)$_n$ and let $V(G) = A\cup B\cup C$ be its ABC-partition. Then $G[A]=C_5$, by definition. Let $a_1, a_2\in A$ such that $\{a_1, a_2\}$ is an edge in $G[A]$, and define the map $f$ by $f(G)= G[\{a_1, a_2\} \cup B \cup C]$.  The graph $f(G)$ has $n-3$ vertices and is a split graph with $KS$-partition  $K=\{a_1, a_2\}\cup B$ and $S=C$. Since the neighborhood of $a_1$ does not include any vertices in $C$, then another $KS$-partition is $K'=B$ and $S=C\cup \{a_1\}$. By Proposition~\ref{unique-bal-prop}, $f(G)$ is an unbalanced split graph, hence $f(G) \in $ \NGonecuptwo.   Note that  $\omega(f(G))= \chi(f(G))=2+|B_G|$. The vertices in $f(G)$ that have degree $1+|B_G|$ include $a_1$ and $a_2$, and also any vertices in $B_G$ that have no neighbors in $C_G$.  Hence $f(G)\in $ \NGonemtwo$_{n-3}$

Now let $H\in$ \NGonemtwo$_{n-3}$
with ABC-partition $A_H\cup B_H\cup C_H$ and let $a_1, a_2\in A_H$. We define the map $g $  as follows:  $g(H)$ is obtained by adding  three vertices $y_1, y_2, y_3$ to $H$ so that $H[\{a_1,a_2, y_1, y_2, y_3\}]$ is a 5-cycle and adding  all edges between $y_1, y_2, y_3$ and $A_H-\{a_1, a_2\}\cup B_H$. Then $\chi(g(H))= 3+|A_H|-2+|B_H|=|A_H|+|B_H|+ 1$. Each of $a_1, a_2, y_1, y_2, y_3$ has 2 neighbors in the 5-cycle and $|A_H|-2+|B_H|$ other neighbors, so these five vertices are in $A_{g[H]}$. The vertices in   $A_H-\{a_1, a_2\}\cup B_H$  form a clique and each has  at least $5+|A_H|-3+|B_H|= 2+ |A_H|+|B_H|$ neighbors, so these vertices are in $B_{g[H]}$.
Finally,  the vertices in $C_H$ form a stable set and each has at most $|B_H|$ neighbors, so these vertices are in $C_{g[H]}$. Hence the ABC-partition of $g(H)$ is $A_{g(H)} = \{a_1,a_2, y_1, y_2, y_3\}, B_{g(H)} = A_H-\{a_1, a_2\}\cup B_H, $ and $C_{g(H)}= C_H$, and $g(H)\in $ (\NGthree)$_n$.  It is straightforward to check that $g$ is the inverse function of $f$.
\end{proof}

	 
For an NG-graph  $G$, let $B'_G$ be the set of vertices in $B_G$ that have no neighbors in $C_G$.  	
	In the proof of Theorem~\ref{glor-thm},  we obtain our bijections by removing $A_G$ in part (1) and $A_G \cup B'_G$ in part (2).  In both cases, the result is a   split graph on $n-2$ or fewer vertices.  This motivates our defining 
   $({\cal T})_{\leq n}$ to be the set of split graphs on $n$ or fewer vertices and defining $|({\cal S})_0| = |({\cal T})_{\leq 0}| = 1$.  Then by definition,  
$({\cal S})_n=({\cal U})_n\cup ({\cal B})_n$ and $({\cal T})_{\leq n} = ({\cal S})_0 \cup ({\cal S})_1  \cup ({\cal S})_2 \cup \cdots \cup ({\cal S})_n$.

\begin{Thm} \label{glor-thm} Let $G$ be an unlabeled NG-graph on $n$ vertices.  
Then there are bijections between the following pairs of sets:
\begin{enumerate}
\item  \NGonemtwo$_n$ and  $({\cal T})_{\leq n-2}$.
\item  (\NGthree)$_n$   \   and  $({\cal T})_{\leq n-5}$.

 \end{enumerate}
\end{Thm}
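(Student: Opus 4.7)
The plan is to establish both bijections by exhibiting explicit inverse pairs, following the pattern indicated by the remarks preceding the theorem: the forward map deletes $A_G$ in part~(1) and $A_G\cup B'_G$ in part~(2), while the inverse map reconstructs the deleted portion from a given split graph $H$ using the $S$-max $KS$-partition of $H$.  This $S$-max partition is unique up to isomorphism by Proposition~\ref{unique-bal-prop} when $H$ is balanced and by Corollary~\ref{two-unbal-prop} when $H$ is unbalanced, so the reconstruction is well-defined.

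For part~(1), let $G\in$ \NGonemtwo$_n$.  Then $|A_G|\ge 2$ and, by Theorem~\ref{NG-charac}, $f_1(G):=G[B_G\cup C_G]$ is a split graph on $n-|A_G|\le n-2$ vertices whose $KS$-partition $K=B_G$, $S=C_G$ is $S$-max by Lemma~\ref{glorious-lem}(1).  For the inverse, given $H\in ({\cal T})_{\le n-2}$ with $k$ vertices and $S$-max partition $K\cup S$, define $g_1(H)$ by adjoining $n-k\ge 2$ new vertices that induce a clique and are joined completely to $K$ but to no vertex of $S$.  A direct degree count yields $\chi(g_1(H))=|K|+(n-k)$, with each added vertex of degree $\chi(g_1(H))-1$, each $k\in K$ of strictly larger degree (using $S$-maximality, so that $k$ has a neighbor in $S$), and each $s\in S$ of degree at most $|K|<\chi(g_1(H))-1$ (using $n-k\ge 2$).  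Thus the added vertices are exactly $A_{g_1(H)}$, placing $g_1(H)$ in \NGonemtwo$_n$; the maps $f_1$ and $g_1$ are then mutually inverse by construction.

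For part~(2), let $G\in$ (\NGthree)$_n$, so $|A_G|=5$ and $G[A_G]=C_5$.  Then $f_2(G):=G[(B_G-B'_G)\cup C_G]$ is a split graph on $n-5-|B'_G|\le n-5$ vertices whose $KS$-partition $K=B_G-B'_G$, $S=C_G$ is $S$-max, since by the definition of $B'_G$ every vertex of $B_G-B'_G$ has a neighbor in $C_G$.  For the inverse, given $H$ on $m\le n-5$ vertices with $S$-max partition $K\cup S$, form $g_2(H)$ by adjoining five new vertices $y_1,\ldots,y_5$ inducing a 5-cycle together with $|Z|:=n-m-5$ further vertices $Z$ inducing a clique, and inserting all edges between $\{y_1,\ldots,y_5\}\cup Z$ and $K$, all edges between $Z$ and $\{y_1,\ldots,y_5\}$, and no edges between any new vertex and $S$.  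A degree count in the style of the forward direction of Theorem~\ref{not-split-gr-prop} gives $\chi(g_2(H))=|K|+|Z|+3$, with each $y_i$ of degree $\chi(g_2(H))-1$ inducing a 5-cycle, every vertex of $K\cup Z$ of strictly larger degree, and every $s\in S$ of degree at most $|K|<\chi(g_2(H))-1$.  Hence the ABC-partition of $g_2(H)$ is $(\{y_1,\ldots,y_5\},\,K\cup Z,\,S)$ with $B'_{g_2(H)}=Z$, so $g_2(H)\in$ (\NGthree)$_n$; again $f_2$ and $g_2$ are inverse.

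The principal obstacle is the degree-tracking that keeps vertices of $S$ out of $A_{g_i(H)}$.  The vertices of $S$ that are adjacent to every vertex of $K$ (which arise only when $H$ is unbalanced, where $|K|=\omega(H)-1$) acquire degree exactly $|K|$ in $g_i(H)$, and this must be strictly smaller than $\chi(g_i(H))-1$; the construction guarantees this precisely because the reconstruction adjoins at least two new vertices in part~(1) and at least five in part~(2), making the ``gap'' $\chi(g_i(H))-1-|K|$ positive.  Once the ABC-partitions have been correctly identified, the identities $f_i\circ g_i=\mathrm{id}$ and $g_i\circ f_i=\mathrm{id}$ follow immediately from the constructions.
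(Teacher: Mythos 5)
Your proposal is correct and follows essentially the same route as the paper: the same forward maps $G\mapsto G[B\cup C]$ and $G\mapsto G[(B-B')\cup C]$, the same inverse constructions built on an $S$-max $KS$-partition (your ``$C_5$ plus clique $Z$ completely joined'' is exactly the paper's ``clique minus the edges of a 5-cycle''), and the same degree counts to identify the resulting ABC-partitions. Your explicit appeal to Proposition~\ref{unique-bal-prop} and Corollary~\ref{two-unbal-prop} for the well-definedness of the chosen $S$-max partition on unlabeled graphs is a small point the paper leaves implicit, but it is not a different argument.
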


\begin{proof}  Let $G$ be an NG-graph on $n$ vertices  and let $A\cup B\cup C$ be its ABC-partition. 

\medskip

\noindent {\bf Proof of (1):}  Let  $G\in $ \NGonemtwo$_n$.  By definition of an NG$-1$ graph, $deg(b)>deg(a)$
 for any $a\in A, b\in B$,  and hence every $b\in B$ has a neighbor in $C$. Define $f:$ \NGonemtwo$_n\rightarrow ({\cal T})_{\leq n-2}$ by $f(G) = G[B\cup C]$.  We will show $f$ is our desired bijection.  Since $|A| \ge 2$, the graph $G[B\cup C]$ has at most $n-2$ vertices, and by Lemma~\ref{glorious-lem},  it is a split graph,  thus $G[B\cup C] \in ({\cal T})_{\leq n-2}$. Since every $b\in B$ has a neighbor in $C$, the $KS$-partition $K=B$, $S=C$ is an $S$-max partition of $G[B\cup C]$. 

Let $H \in ({\cal T})_{\leq n-2}$.   By Theorem~\ref{split-thm}, we may choose a $KS$-partition of $H$ that is $S$-max. Hence every $b\in K$ has a neighbor in $S$ (otherwise $b$ could be added to $S$).  Define $g:({\cal T})_{\leq n-2}\rightarrow $ \NGonemtwo$_n$ by $g(H)=G $ where $G$ is the graph formed from $H$ by 
adding a clique $A$ of $n - |V(H)| $ new vertices and joining every vertex in $A$ 
  to every vertex in $K$.  Note that $G$ has $n$ vertices, and $G$ is an split 
graph with $KS$-partition $K'=A\cup K$ and $S$.   For $a \in A$, the sets $K' - \{a\}$ and $S \cup \{a\}$ provide another $KS$-partition of $G$, so $G$ is an unbalanced split graph by Proposition~\ref{unique-bal-prop}.
  
Thus $G\in $ \NGonecuptwo$_n$ by Theorem~\ref{NG-unbalanced}. Since $\chi(G)=\omega(G)= |A| + |K|$, the ABC-partition of $G$ is $A\cup K \cup S$ and $G\in$  \NGonemtwo$_n$ because $A$ is a clique and $|A|\geq 2$. It is straightforward to check that $g$ is the inverse function of $f$.

\medskip




\noindent {\bf Proof of (2):}  Let  $G\in $ (\NGthree)$_n$.   Define $f:$ (\NGthree)$_n\rightarrow ({\cal T})_{\leq n-5}$ by $f(G) = G[(B-B')\cup C]$.  We will show $f$ is our desired bijection.  Since $|A| =5$, the graph $f(G)$ has at most $n-5$ vertices, and it is a split graph with $KS$-partition $K=B-B'$, $S=C$.  Thus $f(G) \in ({\cal T})_{\leq n-5}$.   Furthermore, since every $b\in B-B'$ has a neighbor in $C$, the $KS$-partition $K=B-B'$, $S=C$ is an $S$-max partition of $f(G)$. 

Let $H \in ({\cal T})_{\leq n-5}$.   By Theorem~\ref{split-thm}, we may choose a $KS$-partition of $H$ that is $S$-max. Hence every $b\in K$ has a neighbor in $S$.  Define $g:({\cal T})_{\leq n-5}\rightarrow $ (\NGthree)$_n$ by $g(H)=G $ where $G$ is the graph formed from $H$ by 
adding a set $D$ of $n - |V(H)| $ new vertices such that $G[D]$ is a clique minus the edges of a 5-cycle, and  every vertex in $D$ is
adjacent to every vertex in $K$ but to no vertices in $S$. Note that $G$ has $n$ vertices, and $\chi(G)=|D|-2+|K|$ since the vertices in $D$ in the 5-cycle may be colored with 3 instead of 5 colors. Five 
vertices in $D$ have degree $|D|-3+|K|$ and the rest have degree $|D|-1+|K|$. For all 
$b\in K$, $deg_G(b)> |D|+|K|-1$ and for all $c\in S$, $deg(c)\leq |K|$. Thus  in the ABC-partition of $G$, $A$ is the set of five vertices in $D$ that induce a 5-cycle,    $B=(D-A)\cup K$ and $C=S$.  It follows  from Definition~\ref{NG-123-def} that $G\in $ 
(\NGthree)$_n$  and it is straightforward to check that $g$ is the inverse function of $f$.
 \end{proof}

In the proof of Theorem~\ref{glor-thm} we used a more complicated bijection in part (2) because the function $f(G) = G[B\cup C]$ is not injective when applied to graphs in (\NGthree)$_n$.  This is illustrated in Example~\ref{non-inj-ex}.

\begin{example}
\label{non-inj-ex}
{\rm
Let $G $ be the $NG$-3 graph where $A_G= \{a_1,a_2,a_3,a_4,a_5\}$,  $B_G = \{b_1,b_2,b_3\}$, $C_G = \{c_1,c_2\}$ and   $b_1c_1$ is the only edge between $B_G$ and $C_G$.  Note that $\chi(G) = 6$, the vertices in $A_G$ have degree 5, those in $B_G$ have degree greater than 5, and those in $C_G$ have degree less than 5, as needed.  The graph $G - A_G$ consists of a  triangle (with vertices $b_1,b_2,b_3$), an edge between $b_1$ and $c_1$ and an isolated vertex ($c_2$).    

Now let $H $ be the $NG$-3 graph where again $A_H = \{a_1,a_2,a_3,a_4,a_5\}$, $B_H = \{b_1,b_2\}$, $C_H = \{c_1,c_2, c_3\}$ and the edge set between $B_H$ and $C_H$ is $\{b_1c_1, b_1c_3, b_2c_3\}$.     Now   $\chi(G) = 5$, the vertices in $A_H$ have degree 4, those in $B_H$ have degree greater than 4, and those in $C_H$ have degree less than 4, as needed.  
The graph $H - A_H$ consists of a  triangle (with vertices $b_1,b_2,c_3$), an edge between $b_1$ and $c_1$ and an isolated vertex ($c_2$).      Thus $G$ and $H$ are \emph{not} isomorphic, yet $G - A_G$ and $H - A_H$ \emph{are} isomorphic.
}
\end{example}

Theorem~\ref{table-thm} includes a bijection between the classes $({\cal U})_{n-4}$ and $({\cal T})_{\leq n-5}$, which implies
   that $|({\cal U})_n|=|({\cal T})_{\leq n-1}|$.
We provide a second proof of this equality using the classic presentation of split graphs found in \cite{Go80} and this second proof does not rely on   NG-graphs.  
   Let $({\cal U})_n^K$ be the set of triples $(G,K_G,S_G)$ where $G$ is an unlabeled, unbalanced split graph on $n$ vertices, and $K_GS_G$ is a KS-partition that is K-max.  Similarly, define
  $({\cal U})_n^S$ where the KS-partition  $K_GS_G$ is   S-max.  Recall from Proposition~\ref{unique-bal-prop}  that balanced split graphs have a unique KS-partition, so we may define $({\cal B})_n^{KS}$ to be the set of triples $(G,K_G,S_G)$ of balanced split graphs on $n$ vertices together with their unique KS-partition.  

\begin{Thm} There is a bijection between $({\cal U})_n^K$ and $({\cal U})_{n-1}^K\cup ({\cal U})_{n-1}^S\cup {\cal B}^{KS}_{n-1}$, and $|({\cal U})_n| = |({\cal T})_{\leq n-1}|$.
\label{fixed-part}
\end{Thm}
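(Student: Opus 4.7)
The plan is to construct an explicit bijection $\phi : ({\cal U})_n^K \to ({\cal U})_{n-1}^K \cup ({\cal U})_{n-1}^S \cup ({\cal B})_{n-1}^{KS}$ by removing a canonical vertex from $K$, and then to derive $|({\cal U})_n| = |({\cal T})_{\leq n-1}|$ by induction. Given $(G,K,S) \in ({\cal U})_n^K$, so that $|K| = \omega(G)$ and $|S| = \alpha(G) - 1$, Theorem~\ref{split-thm}(iii) provides a vertex $k \in K$ with $S \cup \{k\}$ stable, equivalently $k$ has no neighbor in $S$. Set $\phi(G,K,S) = (G-k,\, K-\{k\},\, S)$; both partition conditions survive the removal. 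In the unlabeled setting the choice of $k$ is immaterial, because any two vertices in $K$ with no $S$-neighbor have identical closed neighborhood $K$ and are therefore true twins, so removing either produces isomorphic triples.

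To place $\phi(G,K,S)$ in the correct target class, the key observations are two flags: (i) whether some $k' \in K - \{k\}$ also has no neighbor in $S$, equivalently $\alpha(G-k) = |S|+1$, and (ii) whether some $s \in S$ is adjacent to every vertex of $K - \{k\}$, equivalently $\omega(G-k) = |K|$. Flags (i) and (ii) cannot hold simultaneously: such an $s$ would be adjacent to $k'$, yet $k'$ has no neighbor in $S$, a contradiction. Thus exactly one of the three mutually exclusive situations occurs---neither flag, only (i), or only (ii)---and Theorem~\ref{split-thm} places $\phi(G,K,S)$ in $({\cal B})_{n-1}^{KS}$, $({\cal U})_{n-1}^K$, or $({\cal U})_{n-1}^S$ respectively.

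For the inverse I would set $\psi(H, K_H, S_H) = (H+v,\, K_H \cup \{v\},\, S_H)$, where $v$ is a new vertex adjacent to precisely $K_H$. In each of the three source cases, a short computation using Theorem~\ref{split-thm} to bound $\omega(H)$ and $\alpha(H)$ shows $\omega(H+v) = |K_H| + 1$ and $\alpha(H+v) = |S_H| + 1$, so the resulting triple is a $K$-max partition of an unbalanced split graph on $n$ vertices. To verify $\phi \circ \psi = \mathrm{id}$, observe that $v$ is always a valid canonical vertex since it has no $S_H$-neighbor by construction; any other candidate $k' \in K_H$ would force $S_H \cup \{k'\}$ stable, which is only possible when $H \in ({\cal U})_{n-1}^K$ and in that case $k'$ is a true twin of $v$ (with identical closed neighborhood $K_H \cup \{v\}$), so removal gives isomorphic results. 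The verification that $\psi \circ \phi = \mathrm{id}$ is immediate because the removed vertex $k$ of $(G,K,S)$ has the same neighborhood $K - \{k\}$ as the added vertex $v$.

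For the cardinality equality, Corollary~\ref{two-unbal-prop} and Proposition~\ref{unique-bal-prop} yield $|({\cal U})_{n-1}^K| = |({\cal U})_{n-1}^S| = |({\cal U})_{n-1}|$ and $|({\cal B})_{n-1}^{KS}| = |({\cal B})_{n-1}|$, so the bijection gives $|({\cal U})_n| = 2|({\cal U})_{n-1}| + |({\cal B})_{n-1}| = |({\cal U})_{n-1}| + |({\cal S})_{n-1}|$. With base case $|({\cal U})_1| = 1 = |({\cal S})_0|$, telescoping delivers $|({\cal U})_n| = \sum_{i=0}^{n-1} |({\cal S})_i| = |({\cal T})_{\leq n-1}|$. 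The most delicate step will be the case analysis showing that the two flags govern the image class of $\phi$, together with the true-twin argument that handles the potential non-uniqueness of the canonical vertex $k$ in the unlabeled setting.
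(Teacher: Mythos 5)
Your proposal is correct and is essentially the paper's own argument: the paper constructs the identical bijection (its $\phi$ adds a vertex adjacent to exactly $K$, its $\psi$ removes one, i.e., your $\psi$ and $\phi$ respectively) and derives the same recurrence $|({\cal U})_n| = |({\cal U})_{n-1}| + |({\cal S})_{n-1}|$ with the same initial condition. Your explicit two-flag case analysis and the true-twin justification for well-definedness are details the paper leaves implicit, but the underlying approach is the same.
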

\begin{proof} Let $(G,K_G,S_G)\in ({\cal U})_{n-1}^K\cup ({\cal U})_{n-1}^S\cup B^{KS}_{n-1}$.  Create a new graph $H$ consisting of $G$ together with a new vertex $w$ that is adjacent to every vertex in $K_G$.     Then $H$ is a split graph and the sets    $K=K_G \cup \{w\} $ and  $S=S_G$ form a KS-partition of $H$.  
 Further, $H$ is an unbalanced split graph because $w$ is not adjacent to any vertex in $S$, hence $K' = K_G  $, $S' = S_G \cup \{w\}$ is another KS-partition of $H$.   Thus, $KS$ is a K-max partition of $H$.   Define $\phi: ({\cal U})_{n-1}^K\cup ({\cal U})_{n-1}^S\cup B^{KS}_{n-1}\rightarrow ({\cal U})_n^K$ by $\phi(G) = H$.

We next show that $\phi$ is a reversible map.   Take any $(H, K_H, S_H) \in  ({\cal U})_n^K$.    By Theorem~\ref{split-thm}, there exists $w\in K_H$ so that $K_H - \{w\}$ is complete and $S_H \cup \{w\}$ is a stable set.    Let $\psi(H) = H-w$.    Note that $\psi(H)$ is the same unlabeled graph  for any possible choice of $w$.
 Then $H-w$ is a split graph on $n-1$ vertices and $K =   K_H-\{w\}$, $S = \ S_H$ is a KS-partition of $H-w$.  
   Therefore, 
$\psi$ reverses the operation of $\phi$.

Finally, we prove $|({\cal U})_n| = |({\cal T})_{\leq n-1}|$.    The sets $({\cal U})_{n-1}^k$,  $({\cal U})_{n-1}^S$, and $ B^{KS}_{n-1}$ are disjoint by definition, so $|({\cal U})_n^K|= |({\cal U})_{n-1}^k| +  |({\cal U})_{n-1}^S| + | B^{KS}_{n-1}|.$  We know from Corollary~\ref{two-unbal-prop} that every unbalanced split graph has exactly two non-isomorphic KS-partitions, one K-max and the other S-max, thus $|({\cal U})_n| = |({\cal U})_n^K| = |({\cal U})_n^S|$.  Balanced split graphs have a unique KS-partition (Proposition~\ref{unique-bal-prop}) thus $|({\cal B})_n| = |({\cal B})_n^{KS}| $.  Thus

  \[|({\cal U})_n|= |({\cal U})_n^K| =  |{\cal U}^K_{n-1}|+ |{\cal U}^S_{n-1}|+ |{\cal B}^{KS}_{n-1}|= 2 |({\cal U})_{n-1}|+|({\cal B})_{n-1}|.\]
   \[|({\cal U})_n| =
  |({\cal U})_{n-1}|+|({\cal S})_{n-1}|.\] Since $|({\cal T})_{\leq n-1}|=|({\cal T})_{\leq n-2}|+|({\cal S})_{n-1}|$, and $|{\cal U}_1|=1=|{\cal T}_0|$, we see that $|({\cal U})_n|$ and $|({\cal T})_{\leq n-1}| $ satisfy the same recurrence and initial condition and therefore are equal.
\end{proof}

\section{Counting}
\label{counting}

In this section we use the bijections from Section~\ref{bijections-sec} to calculate the size of classes of split graphs,  NG-graphs and pseudo-split graphs. 
In \cite{Cl90}, Clarke gives an expression for the number of minimal $k$-covers of a set of $n$ indistinguishable objects.  Royle \cite{Ro00}  then describes a bijection between such $k$-covers and the set of split graphs on $n$ vertices with exactly $k$ maximal cliques that each contain a vertex in none of the other maximal cliques.  Summing over $k$ from 1 to $n$, Royle obtains a formula for $|({\cal S})_n|$, the number of split graphs on $n$ vertices.  Unfortunately, the formula is quite complicated.  

Our bijections from Section~\ref{bijections-sec} allow us to calculate the number of balanced and unbalanced split graphs,  all categories of NG-graphs and pseudo-split graphs  solely in terms of the number of split graphs.  Let $(\mathcal{PS})_n$ denote the set of pseudo-split graphs on $n$ vertices.  The corollary below describes the exact formulas.

\begin{Cor}
\label{cor-formulas}
 The following equalities hold for each $n \ge 1$.
\begin{enumerate}
\item[(1)]  $|(\mathcal{U})_n| = \sum_{i=0}^{n-1} |(\mathcal{S}_i)|$ 
\item[(2)]  $|(\mathcal{B})_n| =  |(\mathcal{S})_n| -   \sum_{i=0}^{n-1} |(\mathcal{S}_i)|$ 
\item[(3)]  $|$(\NGone)$|_n$ $=$  $|$(\NGtwo)$|_n$  $=  |(\mathcal{S})_{n-1}|$
\item[(4)]   $|$(\NGthree)$|_n$ $=   \sum_{i=0}^{n-5} |(\mathcal{S}_i)|$ 
\item[(5)]  $|(\mathcal{NG})_n| =   \sum_{i=0}^{n-1} |(\mathcal{S}_i)| + \sum_{i=0}^{n-5} |(\mathcal{S}_i)|$ 
\item[(6)]   $|(\mathcal{PS})_n| =  |(\mathcal{S})_n| + \sum_{i=0}^{n-5} |(\mathcal{S}_i)|$ 
\end{enumerate}
\end{Cor}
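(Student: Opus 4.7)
The plan is to deduce each of the six formulas from the bijections established in Section~\ref{bijections-sec} together with two disjoint-union decompositions already recorded in the paper, namely $(\mathcal{S})_n = (\mathcal{U})_n \cup (\mathcal{B})_n$ from Definition~\ref{bal-unbal-def} and $(\mathcal{PS})_n = (\mathcal{S})_n \cup (\text{NG-3})_n$ from Remark~\ref{pseudo-rem}. Throughout I use the convention introduced before Theorem~\ref{glor-thm}, that $(\mathcal{T})_{\leq n} = \bigcup_{i=0}^{n} (\mathcal{S})_i$ as a disjoint union and $|(\mathcal{S})_0| = 1$.

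First I would handle (1) and (2) as a pair. Part (1) is essentially a restatement of Theorem~\ref{fixed-part}: that theorem gives $|(\mathcal{U})_n| = |(\mathcal{T})_{\leq n-1}|$, and unfolding the right-hand side by definition yields the claimed sum. Part (2) then falls out of the disjoint decomposition $(\mathcal{S})_n = (\mathcal{U})_n \cup (\mathcal{B})_n$, giving $|(\mathcal{B})_n| = |(\mathcal{S})_n| - |(\mathcal{U})_n|$ and substituting (1).

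Next I would address (3) and (4), which are immediate translations of bijections. Part (3) is Theorem~\ref{ngone-to-split}, which exhibits bijections between (\NGone)$_n$, (\NGtwo)$_n$, and $(\mathcal{S})_{n-1}$, so the three cardinalities coincide. Part (4) follows from Theorem~\ref{glor-thm}(2), which gives a bijection between (\NGthree)$_n$ and $(\mathcal{T})_{\leq n-5}$; again unfolding the right-hand side by definition produces the claimed sum.

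For (5) I would use the partition of $\mathcal{NG}$-graphs into the three types of Definition~\ref{NG-123-def}. Writing $(\mathcal{NG})_n$ as the disjoint union \NGonecuptwo$_n \cup (\text{NG-3})_n$ and invoking Remark~\ref{Un-rem}, which identifies \NGonecuptwo\ with $\mathcal{U}$, we obtain $|(\mathcal{NG})_n| = |(\mathcal{U})_n| + |(\text{NG-3})_n|$; applying (1) and (4) yields the formula. For (6) I would invoke Remark~\ref{pseudo-rem} directly: pseudo-split graphs decompose as a disjoint union of split graphs and NG-3 graphs, so $|(\mathcal{PS})_n| = |(\mathcal{S})_n| + |(\text{NG-3})_n|$, and (4) completes the count.

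There is no real obstacle here: once one is willing to quote Theorems~\ref{ngone-to-split},  \ref{glor-thm}, and \ref{fixed-part} together with Remarks~\ref{pseudo-rem} and \ref{Un-rem}, every equality is bookkeeping. The only point that requires a moment's care is confirming in each case that the union being counted is genuinely disjoint, which is built into the definitions (balanced versus unbalanced, and NG-3 graphs contain an induced $C_5$ so cannot be split by Proposition~\ref{type3-rem}).
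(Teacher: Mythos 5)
Your proposal is correct and follows essentially the same route as the paper: (1), (3), (4) are read off from the bijections of Section~4, (2) from the balanced/unbalanced partition of split graphs, (5) from Remark~\ref{Un-rem} together with (1) and (4), and (6) from Remark~\ref{pseudo-rem} together with (4). The only cosmetic difference is that you cite Theorem~\ref{fixed-part} for $|(\mathcal{U})_n| = |(\mathcal{T})_{\leq n-1}|$ rather than the equivalent bijection recorded in Table~\ref{bij-table}, which changes nothing.
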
  

\begin{proof}
The formulas in (1), (3) and (4) follow directly from the bijections in Section~\ref{bijections-sec}.  To prove (2), we use (1) and the fact that a split graph is either balanced or unbalanced, and to prove    (5)  we apply (1) and (4) and Remark~\ref{Un-rem}.  Finally, to prove (6), we use Remark~\ref{pseudo-rem} and apply the formula from (4).  
\end{proof}


Recall that $|({\cal T})_{\le n}| = \sum_{i=0}^n |(\mathcal{S}_i)|$.  In Table~\ref{counting-table}, we make use of the values computed by Royle \cite{Ro00} for $|({\cal S})_n|$  and the formulas in Corollary~\ref{cor-formulas} to determine the values for $|({\cal T})_{\le n}|$, $|(\mathcal{U})_n|$, $|(\mathcal{B})_n|$, $|(\mathcal{NG})_n|$ and   $|(\mathcal{PS})_n|$ for $n = 0, \hdots, 11$.  The table  in \cite{Ro00} shows  rapid growth in  the number of split graphs.  Royle  does not divide split graphs into balanced and unbalanced as we do.  Table~\ref{ratio-table} shows the ratio of  the number of balanced split graphs to the number of  split graphs on $n$ vertices, indicating that the rapid growth  in the number of split graphs comes from the balanced category.    We conjecture  
that  this ratio approaches 1 as $n$ goes to infinity.  
 We now show why this may  be the case.
 

\begin{Thm}
\label{convergence}
If $\lim_{n \rightarrow \infty}  \frac{|(\mathcal{S})_{n-1}|}{|(\mathcal{S})_{n}|}  \rightarrow  0$  then  $\lim_{n \rightarrow \infty} \frac{|(\mathcal{B})_{n}|}{|(\mathcal{S})_{n}|} \rightarrow  1$.
\end{Thm}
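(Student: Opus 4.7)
The plan is to reduce the statement to a one-term recursion on the single ratio $u_n := |(\mathcal{U})_n|/|(\mathcal{S})_n|$ and then show $u_n \to 0$ under the hypothesis.

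First, Corollary~\ref{cor-formulas}(1) and (2) give $|(\mathcal{B})_n| = |(\mathcal{S})_n| - |(\mathcal{U})_n|$, so
\[
\frac{|(\mathcal{B})_n|}{|(\mathcal{S})_n|} \;=\; 1 - u_n,
\]
and it suffices to show $u_n \to 0$. Rewriting $\sum_{i=0}^{n}|(\mathcal{S})_i| = |(\mathcal{S})_n|(u_n + 1)$ and dividing by $|(\mathcal{S})_{n+1}|$ yields the recursion
\[
u_{n+1} \;=\; r_{n+1}\,(u_n + 1), \qquad r_{n+1} := \frac{|(\mathcal{S})_n|}{|(\mathcal{S})_{n+1}|},
\]
where the hypothesis says precisely that $r_n \to 0$.

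The main step is to deduce $u_n \to 0$ from this recurrence. Fix $\varepsilon \in (0, 1/2)$ and choose $N$ so that $r_n < \varepsilon$ for all $n > N$. Iterating the inequality $u_{n+1} < \varepsilon(u_n + 1)$ down to the (finite) value $u_N$ gives
\[
u_n \;<\; \varepsilon^{\,n-N}\,u_N \;+\; \sum_{k=1}^{n-N} \varepsilon^k \;\le\; \varepsilon^{\,n-N}\,u_N \;+\; \frac{\varepsilon}{1-\varepsilon}.
\]
Letting $n \to \infty$ with $\varepsilon$ held fixed gives $\limsup_{n\to\infty} u_n \le \varepsilon/(1-\varepsilon) < 2\varepsilon$, and since $\varepsilon$ is arbitrary, $u_n \to 0$, whence $|(\mathcal{B})_n|/|(\mathcal{S})_n| \to 1$.

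The only delicate point is the constant ``$+1$'' sitting inside the recursion: a naive term-by-term bound such as $u_n \le n \cdot r_n$ need not vanish if $r_n$ decays only like $1/n$, so one really does need the geometric iteration above to collapse the entire tail. Once the recursion is identified and the tail is viewed as a geometric series, the remaining $\varepsilon$--$N$ argument is routine.
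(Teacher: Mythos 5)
Your proof is correct, but it takes a genuinely different route from the paper's. The paper telescopes: from $|(\mathcal{U})_{n}|=|(\mathcal{T})_{\leq n-1}|$ it gets $|(\mathcal{B})_{n}|-|(\mathcal{B})_{n-1}|=|(\mathcal{S})_{n}|-2|(\mathcal{S})_{n-1}|$, then drops the nonnegative term $|(\mathcal{B})_{n-1}|$ to obtain the two-sided bound
\[
1 \;\geq\; \frac{|(\mathcal{B})_{n}|}{|(\mathcal{S})_{n}|} \;\geq\; 1 - 2\,\frac{|(\mathcal{S})_{n-1}|}{|(\mathcal{S})_{n}|},
\]
and finishes with the Sandwich Theorem. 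You instead track the complementary ratio $u_n = |(\mathcal{U})_n|/|(\mathcal{S})_n|$ through the one-step recursion $u_{n+1}=r_{n+1}(u_n+1)$ and collapse the tail with a geometric series in an $\varepsilon$--$N$ argument. Both arguments are sound and rest only on Corollary~\ref{cor-formulas}(1)--(2); your algebra (the identity $\sum_{i=0}^{n}|(\mathcal{S})_i|=|(\mathcal{S})_n|(u_n+1)$, the iteration bound $u_n < \varepsilon^{\,n-N}u_N + \varepsilon/(1-\varepsilon)$, and the final passage $\varepsilon/(1-\varepsilon)<2\varepsilon$ for $\varepsilon<1/2$) all checks out. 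What each approach buys: the paper's is shorter because discarding $|(\mathcal{B})_{n-1}|$ sidesteps any iteration, whereas yours is slightly more quantitative and robust --- it shows $\limsup u_n \leq L/(1-L)$ whenever $\limsup r_n = L < 1$ (compared with the bound $2L$ implicit in the paper's inequality), and your closing observation correctly identifies why a naive bound such as $u_n \leq n\, r_n$ would not suffice, which is exactly the subtlety the paper's telescoping trick also avoids.
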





\begin{proof} From Table~\ref{bij-table}, $|(\mathcal{U})_{n}|=|(\mathcal{T})_{\leq n-1}|$, hence $|(\mathcal{U})_{n}|-|(\mathcal{U})_{n-1}|=|(\mathcal{S})_{n-1}|$. Thus,
\[|(\mathcal{B})_{n}|-|(\mathcal{B})_{n-1}|= (|(\mathcal{S})_{n}|-|(\mathcal{U})_{n}|) -( |(\mathcal{S})_{n-1}|-|(\mathcal{U})_{n-1}|)\]\[= (|(\mathcal{S})_{n}|-|(\mathcal{S})_{n-1}|) -(|(\mathcal{U})_{n}| -|(\mathcal{U})_{n-1}|)=|(\mathcal{S})_{n}|-2|(\mathcal{S})_{n-1}|.\]
Thus,   by the hypothesis and the Sandwich Theorem,

\[1\geq \lim_{n\rightarrow\infty} \frac{|(\mathcal{B})_{n}|}{|(\mathcal{S})_{n}|} \geq \lim_{n \rightarrow \infty}   \frac{|(\mathcal{B})_{n}| - |(\mathcal{B})_{n-1}|   }{|(\mathcal{S})_{n}| }   =   \lim_{n \rightarrow \infty}   \frac{ |(\mathcal{S})_n| -  2  |(\mathcal{S})_{n-1}|}{|(\mathcal{S})_{n}| }  
\]\[ =   \lim_{n \rightarrow \infty}   1  -  2\left(\frac{|(\mathcal{S})_{n-1}|}{|(\mathcal{S})_{n}|} \right) 
  =    1.\]

\end{proof}




\begin{table}
\begin{tabular}{|c||r|r|r|r|r|r|r|r|r|r|r|r|}\hline  
$n$ &0  &1 &2 &3 &4 &5  &6   &7   \ &8\ \  &9\ \ \  &10\ \ \ \  & 11\ \ \ \ \\ \hline  \hline
$|({\cal S})_n|$ &1 &1 &2 & 4&9 &21 &56 &164 &557 &2,223 &10,766 & 64,956 \\  \hline 
$|({\cal T})_{\le n}|$&1 &2 &4 &8 &17 &38 &94 &258 &815 &3,038 &13,804 &78,760\\  \hline 
$|({\cal U})_n|$ &0 &1 &2 &4 &8 &17 &38 &94 &258 &815 &3,038 &13,804 \\  \hline 
$|({\cal B})_n|$ & 1 &0 &0 &0 &1 &4 &18 &70 &299 &1,408 &7,728 & 51,152 \\  \hline 
$|{(\NGcal)}_n|$&0 &1 &2 &4 &8 &18 &40 &98 &266 &832 &3,076 & 13,898\\  \hline 
$|(\mathcal{PS})_n|$ & 1 & 1 &2 & 4&9 & 22 & 58 & 168 & 565 & 2,240 & 10,804 & 65,050 \\ \hline
\end{tabular}
\caption{The number of split graphs (total, balanced, unbalanced) and NG-graphs on $n$ vertices.}
\label{counting-table}
\end{table}

  \begin{table}
\begin{tabular}{|c||r|r|r|r|r|r|r|r|r|r|r|r|r|}\hline  
$n$ &4  &5 &6 &7 &8 &9 &10 &11 &12 &13 &14 & 15& 16 \\ \hline  \hline
ratio & .11 & .19 &.32 & .42& .54& .63& .72&.79 & .84&.89 & .92& .94 &.96\\  \hline 
\end{tabular}
\caption{The ratio of the number of  balanced split graphs  to split graphs on $n$ vertices.}
\label{ratio-table}
\end{table}

\section{Acknowledgments}

We would like to thank Ke Chen \cite{Chen} and David Constantine \cite{Constantine} for their assistance in proving Theorem \ref{convergence}.

\end{document}